\newtheorem{thm}{Theorem}[section]
\newtheorem{lem}[thm]{Lemma}
\newtheorem{prop}[thm]{Proposition}
\newtheorem{cor}[thm]{Corollary}
\theoremstyle{definition}
\newtheorem{dfn}[thm]{Definition}
\newtheorem{ques}[thm]{Question}
\newtheorem{rem}[thm]{Remark}
\newtheorem{conv}[thm]{Convention}
\newtheorem{conj}[thm]{Conjecture}
\numberwithin{equation}{thm}
\def\A{\mathrm{A}}
\def\a{\mathfrak{a}}
\def\ann{\operatorname{ann}}
\def\cl{\operatorname{Cl}}
\def\depth{\operatorname{depth}}
\def\e{\operatorname{e}}
\def\edim{\operatorname{edim}}
\def\Ext{\operatorname{Ext}}
\def\grade{\operatorname{grade}}
\def\height{\operatorname{ht}}
\def\Hom{\operatorname{Hom}}
\def\lp{\mathrm{(LP)}}
\def\m{\mathfrak{m}}
\def\Min{\operatorname{Min}}
\def\Mod{\operatorname{Mod}}
\def\n{\mathfrak{n}}
\def\p{\mathfrak{p}}
\def\Pic{\operatorname{Pic}}
\def\q{\mathfrak{q}}
\def\r{\mathrm{R}}
\def\spec{\operatorname{Spec}}
\def\st{\mathrm{(S_2)}}
\def\tr{\operatorname{tr}}
\def\Z{\mathbb{Z}}
\begin{document}
\allowdisplaybreaks
\title{Rings whose ideals are isomorphic to trace ideals}
\date{\today}
\author{Toshinori Kobayashi}
\address{Graduate School of Mathematics, Nagoya University, Furocho, Chikusaku, Nagoya, Aichi 464-8602, Japan}
\email{m16021z@math.nagoya-u.ac.jp}
\author{Ryo Takahashi}
\address{Graduate School of Mathematics, Nagoya University, Furocho, Chikusaku, Nagoya, Aichi 464-8602, Japan/Department of Mathematics, University of Kansas, Lawrence, KS 66045-7523, USA}
\email{takahashi@math.nagoya-u.ac.jp}
\urladdr{https://www.math.nagoya-u.ac.jp/~takahashi/}
\thanks{2010 {\em Mathematics Subject Classification.} 13B30, 13F15, 13H10}
\thanks{{\em Key words and phrases.} trace ideal, factorial ring, Gorenstein ring, hypersurface}
\thanks{TK was partly supported by JSPS Grant-in-Aid for JSPS Fellows 18J20660.
RT was partly supported by JSPS Grant-in-Aid for Scientific Research 16K05098 and JSPS Fund for the Promotion of Joint International Research 16KK0099}
\begin{abstract}
Let $R$ be a commutative noetherian ring.
Lindo and Pande have recently posed the question asking when every ideal of $R$ is isomorphic to some trace ideal of $R$.
This paper studies this question and gives several answers.
In particular, a complete answer is given in the case where $R$ is local: it is proved in this paper that every ideal of $R$ is isomorphic to a trace ideal if and only if $R$ is an artinian Gorenstein ring, or a $1$-dimensional hypersurface with multiplicity at most $2$, or a unique factorization domain.
\end{abstract}
\maketitle
\section{Introduction}

This paper deals with trace ideals of commutative noetherian rings.
The notion of trace ideals is classical and fundamental; a lot of studies on this notion have been done in various situations.
Many references on trace ideals can be found in \cite{L,LP}.
Other than them, for instance, trace ideals play an important role in the proof of a main result of Huneke and Leuschke \cite{HL} on the Auslander--Reiten conjecture.
Recently, Goto, Isobe and Kumashiro \cite{GIK} study correspondences of trace ideals with stable ideals and finite birational extensions.

The main purpose of this paper is to consider a question on trace ideals raised by Lindo and Pande \cite{LP}.
They prove as their main result that a local ring is an artinian Gorenstein ring if and only if every ideal is a trace ideal, and ask for which ring every ideal is isomorphic to some trace ideal.
This question originates the celebrated Huneke--Wiegand conjecture: Lindo \cite{L} shows that for such a Gorenstein domain the conjecture holds.
In this paper, we begin with answering the question for rings with full of zerodivisors, which complements the result of Lindo and Pande.

\begin{thm}
Let $R$ be a commutative noetherian ring all of whose nonunits are zerodivisors (e.g. a local ring of depth $0$).
Then the following are equivalent.
\begin{enumerate}[\rm(1)]
\item
$R$ is an artinian Gorenstein ring.
\item
Every ideal of $R$ is a trace ideal.
\item
Every principal ideal of $R$ is a trace ideal.
\item
Every ideal of $R$ is isomorphic to a trace ideal.
\item
Every principal ideal of $R$ is isomorphic to a trace ideal.
\end{enumerate}
\end{thm}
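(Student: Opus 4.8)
The plan is to prove the cycle $(1)\Rightarrow(2)\Rightarrow(3)\Rightarrow(5)\Rightarrow(1)$ together with $(2)\Rightarrow(4)\Rightarrow(5)$, so that the only substantial implications are $(1)\Rightarrow(2)$ and $(5)\Rightarrow(1)$; the others are immediate, since a trace ideal is (isomorphic to) itself and a principal ideal is an ideal. For $(1)\Rightarrow(2)$ I would use that an artinian Gorenstein ring is injective as a module over itself: given any ideal $I$ and any $\varphi\in\Hom_R(I,R)$, Baer's criterion extends $\varphi$ to a map $R\to R$, that is, to multiplication by some $c\in R$, so $\varphi(I)=cI\subseteq I$. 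Summing over all $\varphi$ gives $\tr(I)\subseteq I$, and since $\tr(I)\supseteq I$ always, $I=\tr(I)$ is a trace ideal.

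The heart is $(5)\Rightarrow(1)$, and the first step is a reformulation. For a principal ideal the natural computation gives $\tr((a))=(0:(0:a))=\ann\ann(a)$, because a map $(a)\to R$ is of the form $a\mapsto b$ with $\ann(a)\subseteq\ann(b)$, i.e.\ $b\in\ann\ann(a)$, and has image $(b)$. Since $\tr(-)$ depends only on the isomorphism class of the module and satisfies $\tr(\tr(M))=\tr(M)$, the hypothesis that $(a)$ is isomorphic to some trace ideal forces that trace ideal to be exactly $\tr((a))$. Hence (5) is equivalent to the condition that $(a)\cong\ann\ann(a)$ for every $a\in R$.

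Next I would show that $R$ is artinian, which I expect to be the main obstacle. Because every nonunit is a zerodivisor, prime avoidance shows that every maximal ideal lies in $\ass(R)$. Suppose $\dim R\ge 1$; then some minimal prime $\p$ is properly contained in a maximal ideal $\m$. As $\p\in\Min(R)\subseteq\ass(R)$, I choose $a$ with $\ann(a)=\p$, so that $(a)\cong R/\p$; the reformulation of (5) then yields $\ann(\p)=\ann\ann(a)\cong(a)\cong R/\p$, whence $\ass(\ann(\p))=\ass(R/\p)=\{\p\}$. On the other hand $\m\in\ass(R)$ produces $0\ne s$ with $\ann(s)=\m$, and since $\p\subseteq\m$ we get $\p s=0$, so $Rs\cong R/\m$ sits inside $\ann(\p)$ and contributes $\m$ to $\ass(\ann(\p))$ --- contradicting $\ass(\ann(\p))=\{\p\}$ as $\m\ne\p$. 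Therefore $\dim R=0$ and $R$ is artinian.

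Finally, to obtain Gorensteinness I would pass to the decomposition $R\cong\prod_i (R_i,\m_i)$ into artinian local rings, first checking that (5) descends to each $R_i$ (ideals and traces split along the product). In each $R_i$ I pick $0\ne s\in(0:\m_i)$, so $\ann(s)=\m_i$, and the reformulation gives $(0:\m_i)=\ann\ann(s)\cong(s)\cong R_i/\m_i$; thus the socle of $R_i$ is simple, so $R_i$ has type one and, being artinian local, is Gorenstein. Hence $R$ is artinian Gorenstein, closing the cycle. The delicate point is the artinian step: everything hinges on exhibiting, inside $\ann(\p)$, a socle element arising from the associated prime $\m$, which cannot exist in the domain quotient $R/\p\cong\ann(\p)$.
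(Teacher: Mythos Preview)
Your argument is correct and follows a genuinely different path from the paper's. For $(1)\Rightarrow(2)$ the paper simply quotes the Lindo--Pande theorem (reduced to the local case via localization), whereas you give the short self-injectivity proof directly. The real divergence is in closing the cycle: the paper proves $(5)\Rightarrow(3)$ by an element-level argument---taking an isomorphism $\phi:\ann\ann(x)\to(x)$, composing with the inclusion to get multiplication by some $a$ on $(x)$, and then using the hypothesis that non-zerodivisors are units to force $(x)=\ann\ann(x)$---and only then invokes Lindo--Pande for $(3)\Rightarrow(1)$. You instead prove $(5)\Rightarrow(1)$ in one stroke: the associated-prime trick (a minimal prime $\p\subsetneq\m$ would give $\ann\p\cong R/\p$, yet $\m\in\ass R$ forces a copy of $R/\m$ inside $\ann\p$) cleanly yields $\dim R=0$, and then the socle computation in each local factor finishes. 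Your route is more self-contained, avoiding any appeal to the Lindo--Pande theorem and giving an independent proof of that result in the process; the paper's route, on the other hand, isolates the a~priori sharper implication $(5)\Rightarrow(3)$, which may be of separate interest. Both arguments hinge on the same reformulation $\tr(a)=\ann\ann(a)$, but exploit it in different ways.
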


Next we investigate the question of Lindo and Pande in the case of a local ring of depth one.
We prove the following theorem, which states that such a ring as in the question is nothing but a hypersurface singularity of type $(\A_n)$, under some mild assumptions.
This theorem also removes the assumption of a Gorenstein domain from Lindo's result mentioned above.

\begin{thm}
Let $R$ be a commutative noetherian local ring of depth $1$.
Consider the conditions:
\begin{enumerate}[\rm(1)]
\item
Every ideal of $R$ is isomorphic to a trace ideal,
\item
$R$ is a hypersurface with Krull dimension $1$ and multiplicity at most $2$,
\item
The completion $\widehat{R}$ of $R$ is an $(\A_n)$-singularity of Krull dimension $1$ for some $0\le n\le\infty$.
\end{enumerate}
Then the implications {\rm(1)}$\iff${\rm(2)} $\Longleftarrow$ {\rm(3)} hold.
If the residue field of $R$ is algebraically closed and has characteristic $0$, then all the three conditions are equivalent.
\end{thm}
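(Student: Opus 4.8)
The plan is to first recast condition~(1) in the language of fractional ideals. Let $Q$ denote the total quotient ring of $R$, and for an ideal $I$ containing a nonzerodivisor write $I^{-1}=(R:_{Q}I)=\Hom_{R}(I,R)$, so that $\tr(I)=I\,I^{-1}$ and every isomorphism of fractional ideals is multiplication by a unit of $Q$. A short computation then gives the reformulation I will use throughout: $I$ is isomorphic to \emph{some} trace ideal if and only if $I\cong\tr(I)$. Indeed, if $\beta I$ is a trace ideal for some $\beta\in Q^{\times}$ then $\beta I=(\beta I)(R:\beta I)=I\,I^{-1}=\tr(I)$, while conversely $\tr(I)$ is itself always a trace ideal. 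I would also record that an ideal $I$ with $\grade I\ge 2$ has $I^{-1}=R$ and so is automatically a trace ideal, and that $\tr(I)=R$ exactly when $I$ is principal. Thus condition~(1) is the assertion that $I\cong\tr(I)$ for every ideal $I$ containing a nonzerodivisor; as a preliminary reduction I would show that (1) forces $R$ to be Cohen--Macaulay of dimension one, so that the standing hypothesis $\depth R=1$ may be read as $\dim R=1$.

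The implications not involving~(1) I would dispatch first, as they are comparatively routine. For~(3)$\Rightarrow$(2), a one-dimensional $(\A_n)$-singularity has completion $\widehat{R}\cong k[[x,y]]/(x^{2}-y^{n+1})$, with the evident conventions at $n=0$ and $n=\infty$; its defining equation has order $2$, so $\widehat{R}$, and hence $R$, is a one-dimensional hypersurface of multiplicity at most $2$, both the hypersurface property and the value of $\e(R)$ being preserved under completion. For~(2)$\Rightarrow$(3) under the extra hypotheses, I would pass to $\widehat{R}=S/(f)$ with $S=k[[x,y]]$ regular and $\operatorname{ord}(f)=2$; since $k$ has characteristic $0$ the leading quadratic form is diagonalizable, and after completing the square $f$ becomes $x^{2}-g(y)$, whereupon Weierstrass preparation together with $k$ algebraically closed identifies $g$, up to a unit and a coordinate change, with $y^{n+1}$, exhibiting the $(\A_n)$-singularity.

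For~(2)$\Rightarrow$(1) the structural engine is that a one-dimensional hypersurface of multiplicity $2$ has \emph{minimal multiplicity}: here $\edim R=2=\e(R)$, so $\m^{2}=x\m$ for a minimal reduction $x$ of $\m$. From this I would first deduce, uniformly in the residue field, that $\m$ is already a trace ideal: the blowup $x^{-1}\m$ is a ring equal both to $\End(\m)=(\m:\m)$ and to $\m^{-1}=(R:\m)$, and the criterion that $I$ is a trace ideal precisely when $\End(I)=I^{-1}$ then applies. I would then propagate this to an arbitrary ideal $I$ containing a nonzerodivisor: the relation $\m^{2}=x\m$ forces $I$ to be stable, so $\End(I)$ is an overring $B$ with $R\subseteq B\subseteq\overline{R}$ and $I\cong B$, and the trace becomes the conductor, $\tr(I)\cong(R:B)$. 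Finally, because every overring of such an $R$ is again Gorenstein of multiplicity at most $2$, the conductor $(R:B)$ is a principal $B$-ideal, so $(R:B)\cong B\cong I$ and hence $I\cong\tr(I)$. This is exactly the circle of ideas tying together trace ideals, stable ideals and finite birational extensions in the work of Goto--Isobe--Kumashiro.

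The hard part is~(1)$\Rightarrow$(2), which I expect to be the main obstacle. Having reduced to $R$ Cohen--Macaulay of dimension one, everything comes down to the bound $\e(R)\le 2$, and to obtain it one must manufacture an obstruction: assuming $\e(R)\ge 3$, exhibit an ideal $I$ with $I\not\cong\tr(I)$. The natural test ideals are $\m$ itself and the ideals lying between $\m^{2}$ and $\m$; using the criterion that $I$ is a trace ideal iff $\End(I)=I^{-1}$ together with a length computation for the integral ideal $\tr(I)=I\,I^{-1}$, I would show that when $\e(R)\ge 3$ the colength of $\tr(I)$ is forced to be incompatible with any integral ideal isomorphic to $I$. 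Once $\e(R)\le 2$ is established, Abhyankar's inequality $\e(R)\ge\edim R$ in dimension one gives $\edim R\le 2$, and a Cohen--Macaulay local ring of dimension one with $\edim R\le 2$ is a hypersurface: its minimal free resolution over a two-dimensional regular ring $S$ has the Hilbert--Burch shape $0\to S\to S\to R\to 0$. This quantitative multiplicity bound---producing, for every $R$ with $\e(R)\ge 3$, an explicit ideal provably not isomorphic to any trace ideal---is precisely where the trace operation must be weighed against the length invariants of $R$, and is the crux of the theorem.
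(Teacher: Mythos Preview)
Your proposal has two genuine gaps.

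\textbf{The implication (1)$\Rightarrow$(2).} What you have written is a strategy, not an argument: you assert that when $\e(R)\ge 3$ a length comparison will exhibit an ideal $I$ with $I\not\cong\tr I$, but you give no candidate for $I$ and no computation. Moreover, your ``preliminary reduction'' that (1) forces $\dim R=1$ is itself part of the content of (1)$\Rightarrow$(2) and you offer no justification for it. The paper's proof proceeds quite differently and does not argue by contradiction on $\e(R)$. After disposing of the case $\tr\m=R$ (where $R$ is a DVR), one has $\m=\tr\m$, hence $S:=\m:\m=R:\m\supsetneq R$ since $\depth R=1$. Picking $x\in S\setminus R$ and setting $X=R+Rx\subseteq Q$, one computes $\m\subseteq\m X\subseteq(R:X)X=\tr_RX\subseteq R$, and rules out $\tr_RX=R$ directly. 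Thus $\tr_RX=\m$, and by $\lp$ one gets $\m\cong X$, which is $2$-generated; this yields $\edim R\le 2$, whence $\dim R=1$ (else $R$ would be regular of depth $2$) and $R$ is a hypersurface. The multiplicity bound then comes from showing $\m\cong\m^2$: since $\m$ is a reflexive trace ideal, a short lemma gives $\m\cong S$, and then $\m=(R:\m)\m\cong\m\cdot\m=\m^2$. The key idea you are missing is the construction of the auxiliary $2$-generated fractional ideal $X$; no length bookkeeping is involved.

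\textbf{The implication (2)$\Rightarrow$(1).} Your reformulation ``condition~(1) is the assertion that $I\cong\tr I$ for every ideal containing a nonzerodivisor'' is false as stated, and your argument via stable ideals only treats such $I$. When $R$ is a hypersurface of multiplicity $2$ that is not a domain (e.g.\ $k[\![x,y]\!]/(xy)$ or $k[\![x,y]\!]/(x^2)$), there exist nonzero ideals consisting entirely of zerodivisors, and these must be handled separately. The paper does this by passing to the completion, writing $\widehat R=S/(gh)$ with $g,h\in\n\setminus\n^2$, and observing that any nonzero ideal contained in a minimal prime $\p\in\{gR,hR\}$ is isomorphic to $\p$ (since $R/\p$ is a DVR), while $\p=\ann(\ann\p)=\tr\p$. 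For the $\m$-primary case, your approach through stable ideals and Gorenstein overrings is in the same spirit as the paper's (both rest on $S=I:I$ being Gorenstein and $I$ being $S$-invertible), but the paper additionally uses completion so that henselianness splits $S$ as a product of local Gorenstein rings, which makes the isomorphism $S^*\cong S$ immediate; your version, working over $R$ itself, would need at least to invoke that $S$ is semilocal to trivialize its Picard group.
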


Finally, we explore the question of Lindo and Pande in the higher-dimensional case.
It turns out that the condition in the question is closely related to factoriality of the ring.

\begin{thm}
Let $R$ be a commutative noetherian ring.
Assume that all maximal ideals of $R$ have height at least $2$.
Then the following are equivalent.
\begin{enumerate}[\rm(1)]
\item
Every ideal of $R$ is isomorphic to a trace ideal.
\item
$R$ is a product of factorial rings (i.e., unique factorization domains).
\end{enumerate}
In particular, when $R$ is local, every ideal of $R$ is isomorphic to a trace ideal if and only if $R$ is factorial.
\end{thm}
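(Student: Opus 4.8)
The plan is to prove the displayed equivalence for general $R$ and then read off the local statement as the case where $\spec R$ is connected. The implication {\rm(2)}$\Rightarrow${\rm(1)} is the routine half. Since both the formation of trace ideals and module-isomorphism of ideals are compatible with finite direct products, I may assume $R$ is a single factorial domain. Given a nonzero ideal $I$, I would factor out the greatest common divisor $d$ of a finite generating set, writing $I=dJ$ with $\gcd(J)=1$; as $d$ is a nonzerodivisor this gives $I\cong J$. Because height-one primes of a factorial ring are principal, $\gcd(J)=1$ forces $\height J\ge 2$ (or $J=R$), whence normality yields $R:J=\bigcap_{\height\q=1}(R_\q:J_\q)=\bigcap_{\height\q=1}R_\q=R$, so that $\tr(J)=J(R:J)=J$. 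Thus $J$ is a trace ideal and $I\cong J$; the principal case is $I\cong R=\tr(R)$.

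For {\rm(1)}$\Rightarrow${\rm(2)} the first step is to observe that condition {\rm(1)} passes to localizations: an ideal of $R_\p$ is the localization of an ideal of $R$, and both being a trace ideal and being isomorphic survive localization of finitely generated modules. Hence every $R_\p$ satisfies {\rm(1)}, and I can invoke the two preceding theorems (the depth-$0$ and depth-$1$ cases): depth $0$ would force $R_\p$ artinian, and depth $1$ would force $\dim R_\p=1$, so in fact $\depth R_\p\ge\min(2,\dim R_\p)$. Therefore $R$ satisfies Serre's condition $\st$; in particular $R$ is unmixed, and by the height hypothesis every prime of height $\le 1$ is non-maximal and can be enlarged to a prime of height $2$.

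The heart of the argument is to upgrade $\st$ to normality, i.e.\ to establish regularity in codimension $\le 1$: I must show $R_\p$ is a field for every minimal prime and a discrete valuation ring for every height-one prime. The preceding theorems only give that each height-one localization is a one-dimensional hypersurface of multiplicity at most $2$ and each minimal localization is artinian Gorenstein, so the task is to exclude the genuinely singular, non-reduced, or reducible possibilities in codimension $\le 1$. Here the hypothesis $\height\m\ge 2$ is essential: any such bad prime is non-maximal, so I localize at a height-two prime above it and reduce to a two-dimensional local Cohen--Macaulay ring satisfying {\rm(1)} that fails to be normal. The main obstacle is to derive a contradiction in this two-dimensional situation. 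I expect to do so by producing an explicit ideal $I$ that is not isomorphic to its own trace---for instance one built from the normalization or a non-principal reflexive height-one ideal---exploiting that $I\cong\tr(I)$ would force $\mu(I)=\mu(\tr(I))$ in spite of the proper inclusion $I\subsetneq\tr(I)$. Constructing this obstruction ideal and verifying $I\not\cong\tr(I)$ is the technical crux.

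Once $R$ is known to be normal it is a finite product of normal domains (its connected components), each again satisfying {\rm(1)}; this produces the asserted product decomposition and reduces the factoriality claim to a normal local domain, which also covers the ``in particular'' statement. There I would show every height-one prime $\p$ is principal. From $\tr(\p)=\p(R:\p)$ and the fact that $R_\q$ is a discrete valuation ring for every height-one $\q$, one gets $\tr(\p)_\q=R_\q$ for all such $\q$, so $\tr(\p)$ is either $R$ or of height $\ge 2$. But {\rm(1)} gives $\p\cong\tr(\p)$, and since isomorphic fractional ideals differ by a unit of the fraction field---which commutes with the reflexive hull---divisoriality is preserved under this isomorphism; as $\p$ is divisorial, so is $\tr(\p)$, forcing pure height one unless $\tr(\p)=R$. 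Hence $\tr(\p)=R$, so $\p$ is invertible and thus principal, and $R$ is factorial by Kaplansky's criterion. This completes {\rm(1)}$\Rightarrow${\rm(2)}.
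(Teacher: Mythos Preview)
Your overall architecture matches the paper's: reduce (1)$\Rightarrow$(2) to showing normality via $\st$ plus $(\r_1)$, then split into normal domains and invoke the normal-domain characterization. The $\st$ step and the final factoriality argument are fine and essentially coincide with what the paper does. The gap is the $(\r_1)$ step, which you explicitly leave as ``the technical crux'' without carrying it out.

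Your proposed mechanism there---produce an ideal $I$ with $I\subsetneq\tr(I)$ and derive a contradiction because $I\cong\tr(I)$ forces equality of minimal numbers of generators---does not work as stated: a proper inclusion of ideals imposes no inequality on $\nu$ (e.g.\ $\m^2\subsetneq\m$ in a regular local ring of dimension $\ge 2$, or the isomorphism $\m\cong\m^2$ that actually occurs in the depth-one theorem you are invoking). So the number of generators is not the invariant that separates $I$ from $\tr(I)$, and this heuristic gives no traction.

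The paper's argument for $(\r_1)$ is rather delicate and uses \emph{depth} as the distinguishing invariant. After localizing at a prime $\q$ with $\height\q/\p=1$ for a height-one prime $\p$ (so $(R,\m)$ is local with $\depth R\ge 2$ and $\dim R/\p=1$), one first shows $\p=\tr\p$: otherwise $\tr\p$ is $\m$-primary, and the depth lemma applied to the two natural short exact sequences gives $\depth\p=2$ but $\depth\tr\p=1$, contradicting $\p\cong\tr\p$. Next one checks that $\p$ is reflexive (locally, using that $R_\p$ is $1$-dimensional Gorenstein by the depth-one theorem). Then $S:=\p:\p=R:\p$, and the auxiliary lemma on reflexive trace ideals yields an $S$-isomorphism $\p\cong S$, so $\p=xS$ for a non-zerodivisor $x$. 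If $\p\ne xR$, one picks $t$ with $\p\nsubseteq\m^t+xR$, sets $I=\p\cap(\m^t+xR)\subsetneq\p$, and a colon computation shows $\tr I=\p$; then $\lp$ forces $I\cong\p$, but $\depth I=1$ while $\depth\p=2$, a contradiction. Hence $\p=xR$ is principal and $R_\p$ is regular. The height-zero primes are handled separately by a short grade argument. None of this is accessible through generator counts alone.
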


Combining all the above three theorems, we obtain the following characterization of the local rings whose ideals are isomorphic to trace ideals, which gives a complete answer to the question of Lindo and Pande for local rings.

\begin{cor}
Let $R$ be a commutative noetherian local ring.
Then the following are equivalent.
\begin{enumerate}[\rm(1)]
\item
Every ideal of $R$ is isomorphic to a trace ideal.
\item
The ring $R$ satisfies one of the following conditions.
\begin{enumerate}[\rm(a)]
\item
$R$ is an artinian Gorenstein ring.
\item
$R$ is a hypersurface of Krull dimension $1$ and multiplicity at most $2$.
\item
$R$ is a unique factorization domain.
\end{enumerate}
\end{enumerate} 
\end{cor}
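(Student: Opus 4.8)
The plan is to prove the equivalence $(1)\iff(2)$ by splitting into cases according to the value of $\depth R$, invoking in each case the appropriate one of the three preceding theorems. Throughout I use that, since $R$ is local, its maximal ideal $\m$ is the unique maximal ideal and satisfies $\height\m=\dim R$, and that $\depth R\le\dim R$.

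For the implication $(1)\Rightarrow(2)$, I argue by cases on $\depth R$. If $\depth R=0$, then $\m\in\ass R$, so every element of $\m$---that is, every nonunit of $R$---is a zerodivisor; hence the first theorem applies and condition $(1)$ forces $R$ to be an artinian Gorenstein ring, giving case $\mathrm{(a)}$. If $\depth R=1$, then the implication $(1)\Rightarrow(2)$ of the second theorem shows that $R$ is a one-dimensional hypersurface of multiplicity at most $2$, giving case $\mathrm{(b)}$. If $\depth R\ge2$, then $\dim R\ge\depth R\ge2$, so $\height\m\ge2$ and the third theorem applies, forcing $R$ to be factorial, giving case $\mathrm{(c)}$. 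These cases are exhaustive, so $(1)$ implies $(2)$.

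For the converse $(2)\Rightarrow(1)$, I treat the three conditions $\mathrm{(a)},\mathrm{(b)},\mathrm{(c)}$ in turn. Case $\mathrm{(a)}$ is immediate from the first theorem (an artinian Gorenstein local ring has depth $0$, so all of its nonunits are zerodivisors), and case $\mathrm{(b)}$ is immediate from the second theorem (a one-dimensional hypersurface is Cohen--Macaulay of depth $1$). The delicate case is $\mathrm{(c)}$, where $R$ is a UFD but its dimension is a priori unrestricted, so the third theorem does not apply directly unless $\dim R\ge2$. I expect reconciling these dimension ranges---the class of UFDs not being confined to dimension at least $2$---to be the main obstacle.

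To resolve it, I would dispose of the low-dimensional UFDs by identifying them with cases already handled. If $\dim R=0$, then $R$ is a field, hence artinian Gorenstein, so case $\mathrm{(a)}$ applies. If $\dim R=1$, then $R$ is a one-dimensional local UFD; being a UFD it is integrally closed, and a one-dimensional normal noetherian local domain is a DVR, which is regular and therefore a hypersurface of dimension $1$ and multiplicity $1\le2$, so case $\mathrm{(b)}$ applies. In either subcase condition $(1)$ follows from the implications already established. Finally, if $\dim R\ge2$, then $\height\m\ge2$ and the third theorem yields condition $(1)$ directly. This completes both directions.
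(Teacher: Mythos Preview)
Your proof is correct and is precisely the argument the paper has in mind: the corollary is stated in the introduction as following from ``combining all the above three theorems,'' without a written-out proof, and your case split on $\depth R$ together with the reduction of low-dimensional UFDs to cases $\mathrm{(a)}$ and $\mathrm{(b)}$ is exactly the natural way to make that combination rigorous.
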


This paper is organized as follows.
In Section 2, we recall the definition of trace ideals and their several basic properties. We also give a couple of observations on the Lindo--Pande condition.
In Section 3, we consider characterizing rings that satisfy the Lindo--Pande condition.
We state and prove our main results including the theorems introduced above.

\section{Trace ideals and the Lindo--Pande condition}

We begin with our convention.

\begin{conv}
Throughout this paper, unless otherwise specified, all rings are assumed to be commutative and noetherian.
Let $R$ be a ring with total quotient ring $Q$.
Colons are always taken in $Q$; when we need to consider colons in $R$, we use the notation $\ann_R$.
We denote by $(-)^*$ the $R$-dual functor $\Hom_R(-,R)$.
We use $\ell(-)$, $\e(-)$ and $\nu(-)$ to denote the length, the multiplicity and the minimal number of generators of a module, respectively.
For a local ring $(R,\m)$ we denote by $\edim R$ the embedding dimension of $R$, i.e., $\edim(R)=\nu(\m)$.
We often omit subscripts/superscripts as long as they are obvious.
\end{conv}

We recall the definition of a trace ideal.

\begin{dfn}
Let $M$ be an $R$-module.
The {\em trace} of $M$ is defined as the ideal
$$
\tr_RM=(f(x)\mid f\in M^*,\,x\in M)
$$
of $R$, that is, each element has the form $\sum_{i=1}^nf_i(x_i)$ with $f_i\in M^*$ and $x_i\in M$.
Define the $R$-linear map
$$
\lambda_M^R:M^*\otimes_RM\to R,\qquad
f\otimes x\mapsto f(x).
$$
Then $\tr_RM$ is nothing but the image of $\lambda_M^R$.
Using this, one can check that if $M,N$ are $R$-modules with $M\cong N$, then $\tr_RM=\tr_RN$.
An ideal $I$ of $R$ is called an {\em trace ideal} if $I=\tr M$ for some $R$-module $M$.
\end{dfn}

In the proofs of our results on trace ideals of $R$, it is essential to investigate $R$-submodules of $Q$ and their colons in $Q$.
We start by remarking an elementary fact, which will be used several times later.

\begin{rem}\label{fg}
Let $M$ be an $R$-submodule of $Q$.
If $M$ is finitely generated, then $M$ is isomorphic to an ideal of $R$, which can be taken to contain a non-zerodivisor of $R$ if so does $M$.
\end{rem}

\begin{proof}
Let $x_1,\dots,x_n$ generate $M$.
Write $x_i=\frac{y_i}{z_i}$ as an element of $Q$, and put $z=z_1\cdots z_n$.
Then $zM$ is an ideal of $R$.
As $z$ is a non-zerodivisor of $R$, the module $M$ is isomorphic to $zM$.
If $M$ contains a non-zerodivisor $r$ of $R$, then $zM$ contains the element $zr$, which is a non-zerodivisor of $R$.
\end{proof}

For $R$-submodules $M,N$ of $Q$ we denote by $MN$ the $R$-submodule $\langle xy\mid x\in M,\,y\in N\rangle$ of $Q$, which consists of the sums of elements of the form $xy$ with $x\in M$ and $y\in N$.
Here are several fundamental properties of colons and trace ideals, which will be used throughout the paper.

\begin{prop}\label{4}
Let $M$ be an $R$-submodule of $Q$ containing a non-zerodivisor $c$ of $R$.
\begin{enumerate}[\rm(1)]
\item
Let $N$ be an $R$-submodule of $Q$.
The assignments $f\mapsto\frac{1}{c}f(c)$ and $(x\mapsto\alpha x)\mapsfrom\alpha$ make an isomorphism $\Hom_R(M,N)\cong N:M$ of $R$-modules.
\item
There is an equality $\tr M=(R:M)M$ in $Q$.
\item
The equality $M=\tr M$ holds in $Q$ if and only if the equality $M:M=R:M$ holds in $Q$.
\item
Suppose that $M$ is finitely generated.
Then $M$ is reflexive if and only if there is an equality $M=R:(R:M)$ in $Q$.
\end{enumerate}
\end{prop}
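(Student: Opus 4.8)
The plan is to treat part (1) as the technical heart and to derive (2)--(4) from it almost formally. For (1), both maps are patently $R$-linear, it is immediate that $\alpha\mapsto(x\mapsto\alpha x)$ lands in $\Hom_R(M,N)$ whenever $\alpha\in N:M$, and the composite $N:M\to\Hom_R(M,N)\to N:M$ is the identity since $\frac1c(\alpha c)=\alpha$. The real content is therefore the single claim that every $f\in\Hom_R(M,N)$ is multiplication by $\frac1c f(c)$; granting this, the other composite is also the identity and (1) follows.

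I expect the proof of that claim to be the main obstacle, and it is where the hypothesis $c\in M$ is genuinely used. I would show $c\,f(x)=x\,f(c)$ in $Q$ for every $x\in M$. Writing $x=a/b$ with $a,b\in R$ and $b$ a non-zerodivisor, we have $a=bx\in M\cap R$, so that $R$-linearity of $f$ (applied with $a,b,c\in R$) together with commutativity of multiplication in $Q$ gives $b\,(c f(x))=c f(bx)=c f(a)=a f(c)=b\,(x f(c))$; cancelling the non-zerodivisor $b$, which is invertible in $Q$, yields $c f(x)=x f(c)$, i.e. $f(x)=\bigl(\frac1c f(c)\bigr)x$. This denominator-clearing step simultaneously shows $\frac1c f(c)\in N:M$, so both composites are identities and (1) is proved.

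Part (2) is then immediate: by (1) with $N=R$, each $f\in M^*$ is multiplication by some $\alpha\in R:M$, so the generators $f(x)$ of $\tr M$ are exactly the products $\alpha x$ with $\alpha\in R:M$ and $x\in M$, whence $\tr M=(R:M)M$. For (3) I would first record that $1\in M:M$ and that $\tr M\subseteq R$. If $M=\tr M=(R:M)M$, then $M\subseteq R$, and the inclusions $R:M\subseteq M:M$ (since $\alpha M\subseteq(R:M)M=M$) and $M:M\subseteq R:M$ (since $\beta M\subseteq M\subseteq R$) give $M:M=R:M$. Conversely, if $M:M=R:M$, then $1\in R:M$ forces $M\subseteq R$ and $M=1\cdot M\subseteq(R:M)M$; combined with $(R:M)M\subseteq M$ (because $\alpha M\subseteq M$ for $\alpha\in R:M=M:M$) this gives $M=(R:M)M=\tr M$.

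For (4) I would iterate (1). Finite generation of $M$ guarantees that $R:M$ contains a non-zerodivisor $b$ of $R$ (clear denominators of a finite generating set), so (1) applies to the submodule $R:M$ as well and yields $M^{**}\cong(R:M)^{*}\cong R:(R:M)$, while $M\subseteq R:(R:M)$ holds in all cases. The one delicate point is to verify that the natural biduality map $M\to M^{**}$ corresponds, under these identifications, to the inclusion $M\hookrightarrow R:(R:M)$: tracing $x\in M$ through evaluation and the two instances of the isomorphism from (1), its value at the distinguished non-zerodivisor $b\in R:M$ is $bx$, which the final normalization $\frac1b(-)$ returns to $x$. Hence the biduality map is an isomorphism precisely when $M=R:(R:M)$, as asserted; I expect this compatibility check to be the only subtle part of (4), everything else being formal once (1) is in hand.
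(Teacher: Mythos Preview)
Your proposal is correct and follows essentially the same approach as the paper's own proof. You supply the denominator-clearing computation for (1) and the compatibility check in (4) that the paper merely alludes to, and your treatment of (3) rearranges the implications into a direct two-direction argument rather than the paper's containment-by-containment equivalence, but the mathematical content is identical.
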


\begin{proof}
(1) One can show that the equality $f(c)x=cf(x)$ in $Q$ holds for each $x\in M$ by describing $x$ as an element of $Q$.
It is now easy to verify that the two assignments define mutually inverse bijections.

(2) We can directly check the assertion by using the isomorphism in (1).

(3) By (2) we have only to show that $M=(R:M)M$ if and only if $M:M=R:M$.
It is obvious that $M\supseteq(R:M)M$ if and only if $M:M\supseteq R:M$.
The implications
$$
M\subseteq(R:M)M
\implies M\subseteq R
\implies M:M\subseteq R:M
\implies 1\in R:M
\implies M\subseteq(R:M)M
$$
hold, which shows that $M\subseteq(R:M)M$ if and only if $M:M\subseteq R:M$.

(4) By Remark \ref{fg} we see that $R:M$ contains a non-zerodivisor of $R$.
Applying (1) twice, we have isomorphisms $M^{**}=(M^*)^*\cong(R:M)^*\cong R:(R:M)$.
Composition with the canonical homomorphism $M\to M^{**}$ gives a homomorphism $M\to R:(R:M)$, which we observe is nothing but the inclusion map.
The assertion immediately follows from this.
\end{proof}

Lindo and Pande \cite{LP} raise the question asking when each ideal of a given ring is isomorphic to a trace ideal.
To consider this question effectively, we give a name to the condition in it.

\begin{dfn}
We define the {\em Lindo--Pande condition} $\lp$ by the following.
\begin{enumerate}
\item[$\lp$]
Every ideal of $R$ is isomorphic to some trace ideal of $R$.
\end{enumerate}
\end{dfn}

\begin{ques}[Lindo--Pande]
When does $R$ satisfy $\lp$?
\end{ques}

Let us give several remarks related to the condition $\lp$.

\begin{rem}\label{3}
\begin{enumerate}[(1)]
\item
Let $M,N$ be $R$-modules.
If $M\cong\tr N$, then $M\cong\tr M$.
Therefore, $\lp$ is equivalent to saying that each ideal $I$ of $R$ isomorphic to its trace: $I\cong\tr I$.
\item
When $R$ satisfies $\lp$, any finitely generated $R$-submodule $M$ of $Q$ admits an isomorphism $M\cong\tr M$.
\item
If $R$ satisfies $\lp$, then so does $R_S$ for each multiplicatively closed subset $S$ of $R$.
When $R$ is local, if the completion $\widehat R$ satisfies $\lp$, then so does $R$.
\end{enumerate}
\end{rem}

\begin{proof}
(1) Taking the traces of both sides of the isomorphism $M\cong\tr N$, we have $\tr M=\tr(\tr N)$.
The latter trace coincides with $\tr N$ by \cite[Proposition 2.8(iv)]{L}.
Hence $\tr M=\tr N\cong M$.

(2) The assertion follows from Remark \ref{fg} and (1).

(3) The assertion on localization is shown by using (1) and \cite[Proposition 2.8(viii)]{L}.
For the assertion on completion, apply (1) and \cite[Exercise 7.5]{E}.
\end{proof}

Now we recall that an {\it invertible} $R$-module is by definition a finitely generated $R$-module $M$ such that $M_\p\cong R_\p$ for every prime ideal $\p$ of $R$.
The isomorphism classes of invertible $R$-modules form an abelian group with multiplication $\otimes_R$ and identity $[R]$, which is called the {\em Picard group} $\Pic R$ of $R$.
The condition $\lp$ implies the triviality of this group.

\begin{prop}\label{20}
If $R$ satisfies $\lp$, then $\Pic R=0$.
\end{prop}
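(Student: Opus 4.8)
The plan is to prove the contrapositive-free statement directly: I will show that under $\lp$ every invertible $R$-module $M$ is free, which is exactly the assertion $\Pic R=0$. The two ingredients I want to combine are that the trace of an invertible module is the whole ring, and that an invertible module can be realized as a finitely generated $R$-submodule of $Q$, so that the Lindo--Pande condition becomes applicable to it through Remark \ref{3}(2).

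First I would establish that $\tr_R M=R$ for any invertible $M$. The map $\lambda_M^R\colon M^*\otimes_R M\to R$ whose image is $\tr_R M$ is compatible with localization for finitely generated $M$, and for each prime $\p$ the isomorphism $M_\p\cong R_\p$ identifies $(\lambda_M^R)_\p$ with the evaluation map $R_\p\otimes_{R_\p}R_\p\to R_\p$, which is an isomorphism. Since a homomorphism to $R$ is surjective as soon as it is surjective at every maximal ideal, $\lambda_M^R$ is surjective, so $\tr_R M=R$. (Equivalently, the dual-basis elements of the rank-one projective $M$ satisfy $\sum_i f_i(m_i)=1$, so $1\in\tr_R M$.)

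Next I would realize $M$ inside $Q$. Being projective, $M$ is a direct summand of a free module and hence torsion-free, so the canonical map $M\to M\otimes_R Q$ is injective. The target is an invertible $Q$-module, and the decisive point is that $Q$ is semilocal: its primes correspond to the primes of $R$ consisting of zerodivisors, and by prime avoidance each such prime is contained in one of the finitely many associated primes of $R$, so $Q$ has only finitely many maximal ideals. Over a commutative semilocal ring every finitely generated projective module of rank one is free, whence $M\otimes_R Q\cong Q$. Composing the embedding with this isomorphism exhibits $M$ as (a module isomorphic to) a finitely generated $R$-submodule of $Q$.

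Finally I would invoke the hypothesis: since $R$ satisfies $\lp$ and $M$ is isomorphic to a finitely generated $R$-submodule of $Q$, Remark \ref{3}(2) gives $M\cong\tr_R M$, and by the first step $\tr_R M=R$, so $M\cong R$. As $M$ was an arbitrary invertible module, $\Pic R=0$. I expect the main obstacle to be the middle step, the realization of $M$ as a submodule of $Q$; once the semilocality of $Q$ (equivalently the triviality of $\Pic Q$) is in hand, Remark \ref{fg} and Remark \ref{3}(2) finish the argument with no further difficulty.
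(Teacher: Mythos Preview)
Your proof is correct and follows essentially the same route as the paper's: realize the invertible module $M$ as a finitely generated $R$-submodule of $Q$, apply Remark~\ref{3}(2) to get $M\cong\tr_R M$, and observe that $\tr_R M=R$ because $\lambda_M^R$ is an isomorphism. The only difference is cosmetic---the paper cites \cite[Theorem~11.6a,b]{E} for the two auxiliary facts (surjectivity of $\lambda_M^R$ and the embedding $M\hookrightarrow Q$), whereas you supply self-contained arguments for them via localization and the semilocality of $Q$.
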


\begin{proof}
Let $M$ be an invertible $R$-module.
By \cite[Theorem 11.6b]{E} the $R$-module $M$ is isomorphic to an $R$-submodule of $Q$, and we get $M\cong\tr_RM$ by Remark \ref{3}(2).
By \cite[Theorem 11.6a]{E} the map $\lambda_M^R:M^*\otimes_RM\to R$ is an isomorphism, which implies $\tr_RM=R$.
Hence we obtain an isomorphism $M\cong R$, and consequently, the Picard group of $R$ is trivial.
\end{proof}

Recall that a {\em Dedekind domain} is by definition an integral domain whose nonzero ideals are invertible, or equivalently, a noetherian normal domain of Krull dimension at most one.
The above proposition yields a characterization of the Dedekind domains satisfying the Lindo--Pande condition.

\begin{cor}
A Dedekind domain satisfies $\lp$ if and only if it is a principal ideal domain.
\end{cor}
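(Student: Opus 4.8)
The plan is to derive the corollary directly from Proposition \ref{20}, using the standard structure theory of Dedekind domains. In particular, I will use that in a Dedekind domain every nonzero ideal is an invertible module, and that such a ring is a principal ideal domain precisely when all of its nonzero ideals are principal. This will let me sidestep the ideal class group entirely and argue at the level of modules.

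First I would dispose of the ``if'' direction. Assuming $R$ is a principal ideal domain, each ideal has the form $(a)$; for $a\ne 0$ the multiplication map $R\to(a)$, $r\mapsto ra$, is an isomorphism, so every nonzero ideal is isomorphic to $R=\tr_R R$, which is a trace ideal, while the zero ideal equals $\tr_R 0$. Hence every ideal is isomorphic to a trace ideal, i.e. $R$ satisfies $\lp$.

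For the ``only if'' direction, the key step is to invoke Proposition \ref{20}: if $R$ satisfies $\lp$, then $\Pic R=0$, so every invertible $R$-module is isomorphic to $R$. I would then take an arbitrary nonzero ideal $I$ of $R$; since $R$ is Dedekind, $I$ is finitely generated and locally principal, hence invertible as a module, and therefore $I\cong R$. Finally I would note that an ideal isomorphic to $R$ is automatically principal: if $\varphi\colon R\xrightarrow{\ \cong\ }I$ is an isomorphism and $a=\varphi(1)$, then $I=\varphi(R)=aR=(a)$. As this holds for every nonzero ideal, $R$ is a principal ideal domain.

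The essential work is carried entirely by Proposition \ref{20}, so I do not expect a genuine obstacle. The only point needing a little care is the translation between the language of invertible modules, in which Proposition \ref{20} is phrased, and the language of invertible ideals of a Dedekind domain, together with the elementary observation that an ideal which is free of rank one must be principal.
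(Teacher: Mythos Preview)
Your proposal is correct and follows essentially the same approach as the paper: both directions rely on the same ideas, with Proposition \ref{20} doing all the work for the ``only if'' direction and the observation $\tr_R R = R$ (equivalently, $I\cong R$ is isomorphic to a trace ideal) handling the ``if'' direction. Your write-up simply spells out a few more details than the paper's two-line argument.
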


\begin{proof}
Fix a nonzero ideal $I$ of $R$.
If $R$ is a Dedekind domain satisfying $\lp$, then Proposition \ref{20} implies $I\cong R$.
Conversely, if $I\cong R$, then $\tr I=\tr R=R\cong I$.
The assertion now follows.
\end{proof}

\section{Characterization of rings satisfying the Lindo--Pande condition}

We first consider the Lindo--Pande condition for (not necessarily local) rings whose nonunits are zerodivisors.
For this, we need to extend a theorem of Lindo and Pande to non-local rings; the assertion of the following proposition is nothing but \cite[Theorem 3.5]{LP} in the case where $R$ is local.

\begin{prop}\label{13}
The following are equivalent.
\begin{enumerate}[\rm(1)]
\item
$R$ is artinian and Gorenstein.
\item
Every ideal of $R$ is a trace ideal of $R$.
\item
Every principal ideal of $R$ is a trace ideal of $R$.
\end{enumerate}
\end{prop}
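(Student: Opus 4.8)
The plan is to reduce everything to the local case, where the statement is precisely \cite[Theorem 3.5]{LP}, by passing between $R$ and its localizations $R_\m$ at maximal ideals. The two engines are that the trace construction commutes with localization, $(\tr_R M)_\p\cong\tr_{R_\p}M_\p$ (this is \cite[Proposition 2.8(viii)]{L}, already invoked in Remark \ref{3}(3)), together with one elementary observation that I would record first: every ideal $I$ of $R$ satisfies $I\subseteq\tr_R I$. Indeed, the inclusion $\iota\colon I\hookrightarrow R$ is an element of $I^*=\Hom_R(I,R)$ and $\iota(x)=x$, so each $x\in I$ lies in the image of $\lambda_I^R$. Hence an ideal $I$ is a trace ideal exactly when $\tr_R I=I$, a reformulation that can be tested after localizing.

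The implication {\rm(2)}$\Longrightarrow${\rm(3)} is immediate. For {\rm(1)}$\Longrightarrow${\rm(2)}, suppose $R$ is artinian and Gorenstein; then each $R_\m$ is a local artinian Gorenstein ring, so by \cite[Theorem 3.5]{LP} every ideal of $R_\m$ is a trace ideal. Given an ideal $I$ of $R$, I would localize the inclusion $I\subseteq\tr_R I$ from the previous paragraph: for each maximal $\m$ we get $(\tr_R I)_\m=\tr_{R_\m}I_\m=I_\m$, where the last equality holds because $I_\m$ is a trace ideal of $R_\m$. Since the submodules $I\subseteq\tr_R I$ have the same localization at every maximal ideal, the quotient $\tr_R I/I$ vanishes, that is, $I=\tr_R I$ is a trace ideal.

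For {\rm(3)}$\Longrightarrow${\rm(1)} I would first check that {\rm(3)} descends to each $R_\m$. Every principal ideal of $R_\m$ has the form $(aR)_\m$ for some $a\in R$ (clear the unit denominator), and localizing the equality $aR=\tr_R(aR)$ yields $(aR)_\m=\tr_{R_\m}((aR)_\m)$; thus $R_\m$ satisfies {\rm(3)}. By \cite[Theorem 3.5]{LP} each $R_\m$ is artinian and Gorenstein, so in particular $\dim R_\m=0$ for every maximal $\m$, whence $R$ has Krull dimension $0$ and, being noetherian, is artinian; it is Gorenstein because it is so at every maximal ideal.

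The substantive point, and the only place requiring care, is that both reductions rest on the localization compatibility of the trace, and that it is the plain equality condition {\rm(3)} rather than the weaker up-to-isomorphism condition of Remark \ref{3} that must be shown to descend to $R_\m$; once that descent and the dual local-to-global gluing for {\rm(1)}$\Longrightarrow${\rm(2)} are in place, the remainder is routine.
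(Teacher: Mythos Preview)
Your proof is correct and follows essentially the same route as the paper: both arguments reduce to the local result \cite[Theorem 3.5]{LP} by using that $I\subseteq\tr_R I$ and that trace commutes with localization (\cite[Proposition 2.8(viii)]{L}), so that the equality $I=\tr_R I$ can be tested locally. Your version is slightly more explicit in handling the descent of condition (3) to $R_\m$ (observing that every principal ideal of $R_\m$ arises as $(aR)_\m$), whereas the paper compresses all three implications into the single biconditional $I=\tr_R I\iff I_\p=\tr_{R_\p}I_\p$ for all $\p$.
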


\begin{proof}
Let $I$ be an ideal of $R$.
Then $I$ is a trace ideal if and only if $I=\tr_RI$ by \cite[Proposition 2.8(iv)]{L}.
In general, $I$ is contained in $\tr_RI$ by \cite[Proposition 2.8(iv)]{L} again, which enables us to define the quotient $(\tr_RI)/I$.
Using \cite[Proposition 2.8(viii)]{L}, we see that
\begin{align*}
I=\tr_RI
&\iff(\tr_RI)/I=0
\iff((\tr_RI)/I)_\p=0\text{ for all }\p\in\spec R\\
&\iff(\tr_{R_\p}I_\p)/I_\p=0\text{ for all }\p\in\spec R
\iff I_\p=\tr_{R_\p}I_\p\text{ for all }\p\in\spec R.
\end{align*}
Thus we can reduce to the local case and apply \cite[Theorem 3.5]{LP} to deduce the proposition.
\end{proof}

Using the above proposition, we obtain the following theorem including a criterion for a ring with full of zerodivisors to satisfy the Lindo--Pande condition.
Note that in the case where $R$ is local the assumption of the theorem is equivalent to the condition that $R$ has depth zero.

\begin{thm} \label{new1}
Assume that all non-zerodivisors of $R$ are units.
Then the following are equivalent.
\begin{enumerate}[\rm(1)]
\item
$R$ is artinian and Gorenstein.
\item
Every ideal of $R$ is a trace ideal of $R$.
\item
Every principal ideal of $R$ is a trace ideal of $R$.
\item
Every ideal of $R$ is isomorphic to a trace ideal of $R$, that is, $R$ satisfies $\lp$.
\item
Every principal ideal of $R$ is isomorphic to a trace ideal of $R$.
\end{enumerate}
\end{thm}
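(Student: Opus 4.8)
The plan is to reduce everything to the single nontrivial implication $(5)\Rightarrow(3)$. Indeed, Proposition \ref{13} already gives $(1)\iff(2)\iff(3)$ with no hypothesis on zerodivisors, and the implications $(2)\Rightarrow(4)\Rightarrow(5)$ are immediate: a trace ideal is isomorphic to itself (so $(2)\Rightarrow(4)$), and a principal ideal is in particular an ideal (so $(4)\Rightarrow(5)$). Thus the whole theorem follows once I close the cycle by showing that if every principal ideal of $R$ is isomorphic to a trace ideal, then every principal ideal is \emph{equal} to its trace. This is the only place where the standing hypothesis, that every non-zerodivisor is a unit, will be used.

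For $(5)\Rightarrow(3)$, fix $a\in R$ and set $I=\ann_R(a)$. First I would compute the trace of the principal ideal directly (Proposition \ref{4}(2) does not apply, since $a$ need not be a non-zerodivisor): a homomorphism $aR\to R$ is determined by the image of $a$, which may be any element killed by $I$, so $\tr_R(aR)=(0:_RI)$, and in particular $aR\subseteq\tr_R(aR)$ because $aI=0$. By hypothesis $(5)$ together with Remark \ref{3}(1) we have $aR\cong\tr_R(aR)$. Since $aR\cong R/I$ is cyclic, so is $\tr_R(aR)$; write $\tr_R(aR)=bR$, where the isomorphism forces $\ann_R(b)=\ann_R(aR)=I$. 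The target equality $(3)$ is now precisely $aR=bR$. Writing $a=bc$ (possible as $a\in bR$), the relation $\ann_R(bc)=\ann_R(a)=I=\ann_R(b)$ says exactly that multiplication by $c$ is injective on $bR\cong R/I$, i.e.\ $c$ is a non-zerodivisor of the ring $R/I$. It then suffices to upgrade this to $c$ being a \emph{unit} modulo $I$, for then $aR=cbR=bR$.

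This last promotion is where I would invoke the hypothesis. Since every non-unit of $R$ is a zerodivisor, prime avoidance places each maximal ideal of $R$ inside an associated prime, hence every maximal ideal of $R$ is itself an associated prime. Now let $\m$ be any maximal ideal containing $I$ and choose $0\ne y$ with $\ann_R(y)=\m$; then $Iy=0$, so $y\in(0:_RI)=bR$, which exhibits $\m=\ann_R(y)$ as an associated prime of $bR\cong R/I$. As $c$ is a non-zerodivisor on $R/I$, it avoids all associated primes of $R/I$, so $c\notin\m$. Since this holds for every maximal ideal containing $I$, the element $c$ lies in no maximal ideal of $R/I$ and is therefore a unit there, giving $aR=bR=\tr_R(aR)$, as desired.

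The main obstacle is exactly this implication $(5)\Rightarrow(3)$: the delicate point is passing from the isomorphism $aR\cong\tr_R(aR)$ and the inclusion $aR\subseteq\tr_R(aR)$ to an honest equality, which amounts to showing that an injective multiplication endomorphism of the finitely generated module $\tr_R(aR)$ is surjective. This is false for general rings, and the hypothesis on zerodivisors is precisely what rescues it, through the observation that each maximal ideal of $R$ is associated to $R$ and hence (by the argument above) to $R/I$. I would also be careful to run this argument globally rather than prime by prime: the hypothesis constrains the maximal ideals of $R$ but need not localize to non-maximal primes, so a naive reduction to the local case via the support of $(\tr_R(aR))/aR$ is not available.
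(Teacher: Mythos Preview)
Your proof is correct, and the overall architecture matches the paper's: the equivalences $(1)\!\iff\!(2)\!\iff\!(3)$ come from Proposition~\ref{13}, the implications $(2)\Rightarrow(4)\Rightarrow(5)$ are trivial, and the work lies in $(5)\Rightarrow(3)$, for which both arguments compute $\tr_R(xR)=\ann_R(\ann_R(x))$, use the isomorphism $xR\cong\tr_R(xR)$ to extract a multiplication that is injective on $R/\ann_R(x)$, and then use the standing hypothesis to upgrade injectivity to invertibility modulo $\ann_R(x)$.

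Where you diverge from the paper is in that last upgrade. The paper observes that $(x)\cong(x)^*$, applies $\Hom_R(R/(a),-)$, and deduces $\ann_R\big((a)+\ann_R(x)\big)=0$, so that $(a)+\ann_R(x)$ contains a non-zerodivisor, hence a unit. You instead argue purely with associated primes: from the hypothesis, every maximal ideal $\m$ of $R$ lies in $\ass R$, and since any witness $y$ with $\ann_R(y)=\m\supseteq I$ automatically lands in $(0:_RI)=bR$, you get $\m\in\ass_R(R/I)$; thus the non-zerodivisor $c$ on $R/I$ misses every maximal ideal over $I$ and is a unit there. This is a genuinely different and somewhat more elementary route: it bypasses the self-duality $(x)\cong(x)^*$ and the $\Hom$/tensor computation entirely, trading them for a direct statement about $\ass(R/I)$. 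The paper's approach, on the other hand, packages the conclusion as the single equation $\ann_R\big((c)+I\big)=0$, which is perhaps more quotable. Your closing remark that the argument must be run globally (since the hypothesis need not localize to non-maximal primes) is well taken and worth keeping.
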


\begin{proof}
The equivalences (1)$\iff$(2)$\iff$(3) follow from Proposition \ref{13}, while the implications (2)$\implies$(4)$\implies$(5) are obvious.
It suffices to show the implication (5)$\implies$(3).

Assume that (3) does not hold, namely, that there exists a principal ideal $(x)$ of $R$ which is not a trace ideal.
Then, in particular, $x$ is nonzero.
It follows from (5), Remark \ref{3}(1) and \cite[Lemma 2.5]{LP} that $(x)\cong\tr(x)=\ann(\ann(x))$.
Let $\phi:\ann(\ann(x))\to(x)$ be the isomorphism, and $\theta:(x)\to\ann(\ann(x))$ the inclusion map.
The endomorphism $\phi\theta:(x)\to(x)$ corresponds to an endomorphism $R/\ann(x)\to R/\ann(x)$, which is a multiplication map by some element $\overline a\in R/\ann(x)$.
Then $\phi\theta$ is the multiplication map by the element $a\in R$.
Since $\phi\theta$ is injective, $a$ is a non-zerodivisor on $(x)$.
Hence $\grade((a),(x))$ is positive, or in other words, $\Hom_R(R/(a),(x))=0$.
Taking the $R$-dual of the isomorphism $(x)\cong R/\ann(x)$ yields an isomorphism $(x)^*\cong\ann(\ann(x))$, and hence $(x)\cong(x)^*$.
There are isomorphisms
$$
0=\Hom_R(R/(a),(x))\cong\Hom_R(R/(a),(x)^*)\cong(R/(a)\otimes_R(x))^*=(R/(a)+\ann(x))^*\cong\ann((a)+\ann(x)),
$$
which show that the ideal $(a)+\ann(x)$ contains a non-zerodivisor of $R$, which is a unit by the assumption of the theorem.
Therefore, $1=ab+c$ for some $b\in R$ and $c\in\ann(x)$.
We have $\phi(x)=\phi\theta(x)=ax$ and $x=(ab+c)x=abx=b\phi(x)$.
Take any element $y\in\ann(\ann(x))$.
There exists an element $d\in R$ such that $\phi(y)=dx$.
Then $\phi(y)=db\phi(x)=\phi(dbx)$, which implies $y=dbx$ as $\phi$ is injective.
Thus $y$ belongs to $(x)$.
Consequently, we obtain $(x)=\ann(\ann(x))=\tr(x)$.
This contradicts our assumption that $(x)$ is not a trace ideal.
We now conclude that (5) implies (3).
\end{proof}

Next, we study the Lindo--Pande condition for local rings of depth one.
We start by showing a lemma on Gorenstein local rings of dimension one.
Recall that a local ring $R$ is called a {\em hypersurface} if $R$ has codepth at most one, i.e., $\edim R-\depth R\le1$.
This is equivalent to saying that the completion of $R$ is isomorphic to the quotient of a regular local ring by a principal ideal.
A Cohen--Macaulay local ring is said to have {\em minimal multiplicity} if the equality $\e(R)=\edim R-\dim R+1$ holds; see \cite[Exercise 4.6.14]{BH}.

\begin{lem}\label{2}
Let $R$ be a $1$-dimensional Gorenstein local ring with maximal ideal $\m$.
If $\m\cong\m^2$, then $R$ is a hypersurface with $\e(R)\le2$.
\end{lem}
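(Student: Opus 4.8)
The plan is to convert the module isomorphism $\m\cong\m^2$ into numerical information about the Hilbert function, deduce that $R$ has minimal multiplicity, and then exploit the Gorenstein hypothesis on a suitable artinian quotient to force the multiplicity to be at most $2$.

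First I would record that $R$, being $1$-dimensional and Gorenstein, is Cohen--Macaulay of depth $1$, so $\m$ contains a non-zerodivisor and may be viewed as an $R$-submodule of $Q$. Given an isomorphism $\phi\colon\m\to\m^2$, the $R$-linearity of $\phi$ yields $\phi(\m^k)=\m^{k-1}\phi(\m)=\m^{k+1}$ for every $k\ge1$, so $\m^k\cong\m^{k+1}$ and hence $\m^k\cong\m$ for all $k\ge1$. Comparing minimal numbers of generators gives $\dim_k(\m^k/\m^{k+1})=\nu(\m^k)=\nu(\m)=\edim R$ for all $k\ge1$; that is, the Hilbert function of $R$ is constantly $\edim R$ in positive degrees. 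Since for a $1$-dimensional local ring the eventual value of this Hilbert function is the multiplicity $\e(R)$, I conclude $\e(R)=\edim R$, i.e.\ $R$ has minimal multiplicity.

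Next, after harmlessly replacing $R$ by $R[t]_{\m R[t]}$ so as to arrange an infinite residue field (flat base change preserves $1$-dimensionality, Gorensteinness, $\edim$, $\e$, the hypersurface property, and, since $\m$ and $\m^2$ are finitely generated, the hypothesis $\m\cong\m^2$), minimal multiplicity supplies a minimal reduction $x\in\m$ with $\m^2=x\m$; see \cite[Exercise 4.6.14]{BH}. Then $A:=R/xR$ is an artinian Gorenstein local ring whose maximal ideal $\m_A=\m/xR$ satisfies $\m_A^2=(\m^2+xR)/xR=0$ (because $\m^2=x\m\subseteq xR$), and with $\ell(A)=\ell(R/xR)=\e(R)$ since $x$ is a non-zerodivisor generating a reduction of $\m$ and $R$ is Cohen--Macaulay. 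Now $\m_A^2=0$ gives $\m_A\subseteq\operatorname{soc}(A)$, while the Gorenstein condition forces $\dim_k\operatorname{soc}(A)=1$; hence $\dim_k\m_A\le1$ and $\ell(A)=1+\dim_k\m_A\le2$. Therefore $\e(R)=\ell(A)\le2$, and together with $\edim R=\e(R)\le2$ (so that $\edim R-\depth R\le1$) this shows $R$ is a hypersurface with $\e(R)\le2$.

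The main obstacle I expect is the passage from the abstract isomorphism $\m\cong\m^2$ to the concrete ideal equality $\m^2=x\m$ with $x\in\m$: the isomorphism only directly furnishes an element $\alpha$ of the endomorphism ring $\m:\m\subseteq Q$ with $\alpha\m=\m^2$, and $\alpha$ need not lie in $R$ (already for the cusp $k[[t^2,t^3]]$ one has $\m:\m\supsetneq R$). Routing through the numerical invariant $\e(R)=\edim R$ and invoking the standard characterization of minimal multiplicity is precisely what lets me trade $\alpha$ for a genuine minimal reduction $x\in\m$; once that is in hand, extracting $\m_A^2=0$ and reading off the type-one condition is routine.
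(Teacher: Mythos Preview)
Your proof is correct and follows the same overall strategy as the paper: first convert $\m\cong\m^2$ into the statement that the Hilbert function is constantly $\edim R$ in positive degrees, so $\e(R)=\edim R$ and $R$ has minimal multiplicity; then use Gorensteinness to bound $\edim R$ by $2$. The only difference is in how the second step is handled. The paper simply cites Sally's result \cite[Corollary 3.2]{Sa} that a Gorenstein local ring with minimal multiplicity satisfies $\edim R-\dim R\le1$, whereas you essentially reprove this in dimension one by passing to an infinite residue field, taking a minimal reduction $x$ with $\m^2=x\m$, and reading off $\ell(R/xR)\le2$ from the socle of the artinian Gorenstein quotient. Your route is a touch longer but entirely self-contained; the paper's is a one-line citation. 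Both arrive at exactly the same conclusion, and your discussion of why one needs the genuine reduction $x\in\m$ rather than the abstract multiplier $\alpha\in\m:\m$ is a nice clarification of what is going on.
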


\begin{proof}
Put $s=\edim R$.
Note that $\m\cong\m^2\cong\m^3\cong\cdots$.
Hence $\nu(\m^i)=s$ for all $i>0$, and $\ell(R/\m^{n+1})=\sum_{i=0}^n\ell(\m^i/\m^{i+1})=\sum_{i=0}^n\nu(\m^i)=(n+1)s$.
Therefore $\e(R)=\lim_{n\to\infty}\frac{1}{n}\ell(R/\m^{n+1})=s$.
As $R$ has dimension one, it has minimal multiplicity.
Since $R$ is Gorenstein, it satisfies $s\le2$ (see \cite[Corollary 3.2]{Sa}) and so it is a hypersurface.
\end{proof}

We need one more lemma for our next goal, and to prove the lemma we make a remark on homomorphisms of modules over birational extensions.

\begin{rem}\label{6}
Let $S$ be a ring extension of $R$ in $Q$.
Let $M$ and $N$ be $S$-modules such that $N$ is torsion-free as an $R$-module.
Then $\Hom_S(M,N)=\Hom_R(M,N)$.
\end{rem}

\begin{proof}
Let $f:M\to N$ be an $R$-homomorphism.
Take $a\in S$ and $x\in M$.
What we want to show is that $f(ax)=af(x)$.
Write $a=\frac{b}{c}$ as an element of $Q$.
We have $c(f(ax)-af(x))=cf(ax)-caf(x)=f(cax)-caf(x)=f(bx)-bf(x)=0$.
Since $N$ is torsion-free over $R$, we get $f(ax)-af(x)=0$.
\end{proof}

\begin{lem}\label{5}
Let $I$ be a reflexive ideal of $R$ containing a non-zerodivisor of $R$, and set $S=I:I$.
Assume that the equality $I=\tr_RI$ holds.
Then one has an equality $I=\tr_R S$.
In particular, if there is an isomorphism $S\cong\tr_RS$ of $R$-modules, then one has an isomorphism $I\cong S$ of $S$-modules.
\end{lem}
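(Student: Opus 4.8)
The plan is to compute $\tr_R S$ directly inside $Q$ using the colon formalism of Proposition \ref{4}, and then to upgrade an $R$-linear isomorphism to an $S$-linear one by means of Remark \ref{6}. The whole argument is driven by one chain of equalities in $Q$.

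First I would record that $S = I:I$ is an $R$-submodule of $Q$ containing $R$, and in particular contains the non-zerodivisor $1$. The hypothesis $I = \tr_R I$ combined with Proposition \ref{4}(3) yields the crucial identification $S = I:I = R:I$. Applying Proposition \ref{4}(2) to $S$ gives $\tr_R S = (R:S)\,S$. Next I compute $R:S = R:(R:I)$, and since $I$ is a finitely generated reflexive ideal containing a non-zerodivisor, Proposition \ref{4}(4) gives $R:(R:I) = I$, so $R:S = I$. Finally $I\cdot S = I$: the containment $I \subseteq IS$ holds because $1 \in S$, and $IS \subseteq I$ holds because $S = I:I$ sends $I$ into $I$. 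Stringing these together produces $\tr_R S = (R:S)\,S = I\cdot S = I$, which is the first assertion.

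For the ``in particular'' clause, suppose $S \cong \tr_R S$ as $R$-modules. Having just shown $\tr_R S = I$, this gives an $R$-module isomorphism $\phi : S \to I$. The observation is that $I$, being a submodule of $R$, is torsion-free as an $R$-module, and that both $S$ and $I$ are modules over the ring extension $S$ of $R$ in $Q$. Hence Remark \ref{6}, applied with $M = S$ and $N = I$, shows $\Hom_S(S,I) = \Hom_R(S,I)$; in particular $\phi$ is automatically $S$-linear, and being bijective it is an isomorphism of $S$-modules, giving $I \cong S$ over $S$.

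The main obstacle is conceptual rather than computational: the first part produces only an $R$-module isomorphism $S \cong I$, whereas the conclusion demands one of $S$-modules. The content of the proof is precisely that no further work is required — the torsion-freeness of $I$ over $R$ forces every $R$-linear map $S \to I$ to be $S$-linear, which is exactly the statement of Remark \ref{6}. The only care needed is to verify the hypotheses of each invoked result: that $S$ contains a non-zerodivisor, that $I$ is finitely generated and reflexive and contains a non-zerodivisor, and that $I$ is $R$-torsion-free.
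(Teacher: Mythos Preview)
Your proof is correct and follows essentially the same approach as the paper's: both use Proposition~\ref{4}(3) to get $S=R:I$, Proposition~\ref{4}(4) (reflexivity) to get $R:S=I$, and Proposition~\ref{4}(2) to conclude $\tr_RS=(R:S)S=IS=I$, then invoke Remark~\ref{6} to upgrade the $R$-isomorphism to an $S$-isomorphism. Your version is slightly more explicit about why $IS=I$ and about verifying the hypotheses of each cited result, but the argument is the same.
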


\begin{proof}
First of all, note that $I$ is an $S$-module.
We apply Proposition \ref{4} several times.
We have $S=I:I=R:I$ and $I=R:(R:I)=R:S$.
Hence $\tr_RS=(R:S)S=IS=I(I:I)=I$.
Therefore, if $S\cong\tr_RS$, then there is an $R$-isomorphism $I\cong S$, and it is an $S$-isomorphism by Remark \ref{6}.
\end{proof}

For each $n\in\Z_{\ge0}\cup\{\infty\}$, a $1$-dimensional {\em hypersurface singularity of type $(\A_n)$} (or {\em $(\A_n)$-singularity} for short) is by definition a ring that is isomorphic to the quotient
$$
R_n=k[\![x,y]\!]/(x^2+y^{n+1})
$$
of a formal power series ring over a field $k$, where we set $R_0=k[\![x]\!]$ and $R_\infty=k[\![x,y]\!]/(x^2)$.
It is known that a $1$-dimensional $(\A_n)$-singularity has finite (resp. countable) Cohen--Macaulay representation type for $n\in\Z_{\ge0}$ (resp. $n=\infty$); see \cite[Corollary (9.3) and Example (6.5)]{Y}.
Hence, there exist only at most countably many indecomposable torsion-free modules over such a ring.

Now we can achieve our second purpose of this section, which is to give a characterization of the local rings of depth one that satisfy the Lindo--Pande condition.

\begin{thm}\label{new2}
Let $(R,\m,k)$ be a local ring with $\depth R=1$.
Consider the following conditions.
\begin{enumerate}[\rm(1)]
\item
The ring $R$ satisfies $\lp$.
\item
The completion $\widehat{R}$ satisfies $\lp$.
\item
The ring $R$ is a hypersurface with $\dim R=1$ and $\e(R)\le 2$.
\item
The completion $\widehat{R}$ is a $1$-dimensional $(\A_n)$-singularity for some $n\in\Z_{\ge0}\cup\{\infty\}$.
\end{enumerate}
Then the implications {\rm(1)}$\iff${\rm(2)}$\iff${\rm(3)} $\Longleftarrow$ {\rm(4)} hold.
If $k$ is algebraically closed and has characteristic $0$, then all the four conditions are equivalent.
\end{thm}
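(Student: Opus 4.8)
The plan is to prove the two core implications {\rm(1)}$\implies${\rm(3)} and {\rm(3)}$\implies${\rm(2)} directly and close the cycle {\rm(1)}$\implies${\rm(3)}$\implies${\rm(2)}$\implies${\rm(1)}, the last arrow being exactly Remark \ref{3}(3). The implication {\rm(4)}$\implies${\rm(3)} is immediate: the defining series $x^2+y^{n+1}$ (likewise $x^2$, or a regular parameter) has order at most $2$, so each $R_n$ is a one-dimensional hypersurface of multiplicity at most $2$, a property that passes from $\widehat R$ back to $R$ by faithful flatness. Thus the work splits into the module-theoretic equivalence {\rm(1)}$\iff${\rm(3)}$\iff${\rm(2)} and, under the hypotheses on $k$, the normal-form computation {\rm(3)}$\implies${\rm(4)}.

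For {\rm(1)}$\implies${\rm(3)} I would first show that $R$ is Cohen--Macaulay of dimension one: since $\depth R=1$ forces $\dim R\ge1$, it suffices to rule out $\dim R\ge2$, in which case the higher-dimensional theorem stated in the introduction would make $R$ a normal domain, hence satisfy $\st$ and have depth at least $2$, a contradiction. For such a nonregular $R$ the maximal ideal is automatically a reflexive trace ideal: the only ideals between $\m$ and $R$ are $\m$ and $R$, and $\tr_R\m\ne R$ because $\m$ is not free, so $\m=\tr_R\m$; a short colon computation gives $\m=R:(R:\m)$, whence $\m$ is reflexive by Proposition \ref{4}(4). Now $\lp$ and Remark \ref{3}(2) give $S\cong\tr_RS$ for $S=\m:\m$, so Lemma \ref{5} applies and produces an isomorphism $\m\cong\m:\m$; writing it, via Proposition \ref{4}(1), as multiplication by a unit of $Q$ and using $1\in\m:\m$ forces $\m^2=x\m$ for some $x\in\m$, that is, $\m\cong\m^2$. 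Granting that $R$ is Gorenstein (addressed below), Lemma \ref{2} then delivers {\rm(3)}.

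For {\rm(3)}$\implies${\rm(2)} I would replace $R$ by $\widehat R$, condition {\rm(3)} being completion-stable, and prove that a complete one-dimensional hypersurface with $\e\le2$ satisfies $\lp$. If $\e=1$ the ring is a discrete valuation ring, where every ideal is free and $I\cong R=\tr_RI$. If $\e=2$ the ring has minimal multiplicity, $\m^2=x\m$, and $\tfrac1x\m=\m:\m$ is a birational overring with length-one quotient. Given a fractional ideal $M$ with $S:=M:M$, I would induct on $\ell(\overline R/R)$ in the reduced cases, treating the nonreduced case $R\cong R_\infty$ by direct bookkeeping: when $S=R$ one shows $M=\tr_RM$ using Proposition \ref{4}; when $S\supsetneq R$, the module $M$ lives over the overring $S$, which has strictly smaller conductor length, Remark \ref{6} identifies $R$- and $S$-linear maps, and the inductive hypothesis together with Lemma \ref{5} returns $M\cong\tr_RM$, the base case being the product of discrete valuation rings $\overline R$.

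Finally, for {\rm(3)}$\implies${\rm(4)} with $k$ algebraically closed of characteristic $0$: writing $\widehat R=T/(f)$ with $T$ a two-dimensional complete regular local ring and $\operatorname{ord}(f)\le2$, Weierstrass preparation makes $f$ a monic quadratic in one variable, completing the square (legitimate as $\mathrm{char}\,k\ne2$) brings $f$ to the form $x^2+c(y)$, and algebraic closedness normalizes $c(y)$ to $y^{n+1}$ or $0$, exhibiting $\widehat R$ as an $(\A_n)$-singularity. The main obstacle I anticipate is the one step deferred in {\rm(1)}$\implies${\rm(3)}, namely extracting the rigid conclusion that $R$ is \emph{Gorenstein} from the soft hypothesis $\lp$. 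Here I would first note that $\lp$ localizes (Remark \ref{3}(3)), so each minimal localization $R_\p$ is artinian with $\lp$ and hence Gorenstein by Theorem \ref{new1}, making $R$ generically Gorenstein and guaranteeing a canonical ideal $\omega$; I would then try to show that the isomorphism $\omega\cong\tr_R\omega$ furnished by $\lp$, combined with $\End_R\omega=R$ and the description of the non-Gorenstein locus by $\tr_R\omega$, forces $\tr_R\omega=R$. A secondary difficulty is verifying in {\rm(3)}$\implies${\rm(2)} that the endomorphism overrings $S=M:M$ that arise remain of the form to which the induction applies.
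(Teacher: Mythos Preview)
Your plan for {\rm(1)}$\implies${\rm(3)} has a genuine circularity: the ``higher-dimensional theorem stated in the introduction'' (Theorem~\ref{7}) is proved \emph{using} Corollary~\ref{new4}, whose proof in turn invokes Theorem~\ref{new2}. You therefore cannot appeal to it to rule out $\dim R\ge2$. Relatedly, your deferred Gorenstein step is not under control: generic Gorensteinness does not by itself furnish a canonical ideal $\omega$---you need $R$ to possess a canonical module in the first place (e.g.\ $R$ complete or a homomorphic image of a Gorenstein ring), and your sketch ``$\omega\cong\tr_R\omega$ plus $\End_R\omega=R$ forces $\tr_R\omega=R$'' is not an argument yet. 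Until both $\dim R=1$ and Gorensteinness are secured, Lemma~\ref{2} cannot be invoked.

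The paper sidesteps both obstacles by a single elementary construction that you are missing: after reducing to $\m=\tr\m$ and setting $S=\m:\m=R:\m$, one picks $x\in S\setminus R$ and shows that the $2$-generated fractional ideal $X=R+Rx$ satisfies $\m\subseteq\tr_RX\subseteq R$; ruling out $\tr_RX=R$ (else $X=R$, contradicting $x\notin R$) gives $X\cong\m$ by $\lp$, hence $\nu(\m)\le2$. Now $1=\depth R\le\dim R\le\edim R\le2$; if $\dim R=2$ then $R$ is regular, forcing $\depth R=2$, a contradiction. Thus $\dim R=1$ and $\edim R-\dim R\le1$, so $R$ is a hypersurface and in particular Gorenstein---no canonical-ideal detour is needed. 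After this, the $\m\cong\m^2$ step and Lemma~\ref{2} run as you indicate.

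For {\rm(3)}$\implies${\rm(2)} your induction on $\ell(\overline R/R)$ is a reasonable idea but, as you yourself flag, the base case ``$S=M:M=R$ implies $M=\tr_RM$'' is false as stated (already for $R$ a DVR and $M=\m$ one has $M:M=R$ but $\tr_RM=R\ne M$; only $M\cong\tr_RM$ holds), and the inductive step needs each overring $S$ to again be a product of local hypersurfaces with $\e\le2$. The paper avoids this bookkeeping: for $\m$-primary $I$ it sets $S=I:I$, quotes \cite[Theorem~3.11]{GIK} to get $S$ Gorenstein and \cite[Corollary~3.2]{BV} to get $I$ invertible over the semilocal ring $S$, hence $I\cong S$; henselianness splits $S$ into local Gorenstein factors, whence $S^*\cong S$ and $\tr_RS=(R:S)S=R:S\cong S^*\cong S$. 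Non-$\m$-primary ideals are handled by the explicit structure $R\cong S/(gh)$ with $g,h$ regular parameters. Your outline would need substantial repair to match this.
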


\begin{proof}
(4)$\implies$(3):
Since $\widehat R$ is a hypersurface, so is $R$.
We see directly from the definition of an $(\A_n)$-singularity that $\e(\widehat R)\le2$.
As the equality $\e(R)=\e(\widehat R)$ holds in general, we have $\e(R)\le2$.

(3)$\implies$(2):
As $\e(\widehat{R})=\e(R)$, $\dim \widehat{R}=\dim R$ and $\depth \widehat{R}=\depth R$, we may assume that $R$ is complete.
Take any ideal $I$ of $R$.
The goal is to prove $I\cong \tr I$.

We begin with the case where $I$ is an $\m$-primary ideal.
Set $S=I:I$.
Then $S$ is an intermediate ring of $R$ and $Q$ which is finitely generated as an $R$-module, and $I$ is also an ideal of $S$.
The proof of Remark \ref{fg} says $zS\subseteq R$ for some non-zerodivisor $z$ of $R$.
By \cite[Theorem 3.11]{GIK}, the ring $S$ is Gorenstein.
Using Proposition \ref{4}(1) and Remark \ref{6}, we have an $S$-isomorphism $S=I:I\to\Hom_R(I,I)=\Hom_S(I,I)$ given by $s\mapsto(i\mapsto si)$.
Hence $I$ is a {\em closed} ideal of $S$ in the sense of \cite{BV}.
It follows from \cite[Corollary 3.2]{BV} that $I$ is an invertible ideal of $S$.
As $S/\m S$ is artinian and all maximal ideals of $S$ contain $\m S$, the ring $S$ is semilocal.
We observe $I\cong S$ by \cite[Lemma 1.4.4]{BH}.
Thus it is enough to check that $S$ is isomorphic to its trace as an $R$-module.
Using Proposition \ref{4}, we obtain $\tr_RS=(R:S)S=R:S\cong S^*$.
Since $R$ is henselian, $S$ is a product of local rings: we have $S\cong S_1\times\cdots\times S_r$, where $S_i$ is local for $1\le i\le r$.
Each $S_i$ is a localization of $S$, so it is Gorenstein.
Hence $(S_i)^*\cong\omega_{S_i}\cong S_i$ for each $i$, and therefore $S^*\cong S$.
Consequently, we obtain $S\cong\tr_RS$ as desired.

Next we consider the case where $I$ is not an $\m$-primary ideal.
Then $I$ is contained in some minimal prime $\p$ of $R$.
When $I=0$, we have $I=\tr I$ and are done.
So we assume $I\ne0$, which forces $R$ not to be a domain.
By Cohen's structure theorem and the assumption that $\e(R)\le2$, we can identify $R$ with the ring $S/(f)$, where $(S,\n)$ is a $2$-dimensional regular local ring and $f$ is a reducible element in $\n^2\setminus\n^3$.
Write $f=gh$ with $g,h\in\n\setminus\n^2$.
Then $g,h$ are irreducible, and we see that $\Min R=\{gR,hR\}$ (possibly $gR=hR$).
Hence $\p$ is equal to either $gR$ or $hR$.
We also observe $\ann(gR)=hR\cong R/gR$ and $\ann(hR)=gR\cong R/hR$.
As both $R/gR$ and $R/hR$ are discrete valuation rings, any nonzero submodule of $\p$ is isomorphic to $\p$, and therefore we have only to show that $\p\cong \tr \p$.
Thanks to \cite[Corollary 2.9]{LP}, we obtain $\tr \p=\ann(\ann\p)=\p$, which particularly says $\p\cong\tr\p$.

(2)$\implies$(1):
This implication immediately follows from Remark \ref{3}(3).

(1)$\implies$(3):
We have $\m\subseteq\tr\m\subseteq R$ (see \cite[Proposition 2.8(iv)]{L}), and $\m\cong\tr\m$ by Remark \ref{3}(1).
If $\tr\m=R$, then $\m\cong R$, which means that $R$ is a discrete valuation ring, and we are done.
Thus we may assume $\m=\tr\m$.
Put $S=\m:\m$.
Proposition \ref{4}(3) implies $S=R:\m$.
Applying $(-)^*$ to the exact sequence $0 \to \m \to R \to k \to 0$ gives rise to an exact sequence $0\to R \xrightarrow{\phi} \m^* \to \Ext^1_R(k,R) \to 0$.
Note that $\Ext^1_R(k,R)\not=0$ as $\depth R=1$.
By Proposition \ref{4}(1), the map $\phi$ can be identified with the inclusion $R \subseteq S$ and so we have $R\not=S$.
Choose an element $x\in S\setminus R$ and set $X=R+Rx\subseteq S$. 
Since $\m X\subseteq \m S \subseteq R$, we have $\m \subseteq R:X$ and 
\[
\m=\m R\subseteq\m X\subseteq(R:X)X=\tr_RX\subseteq R,
\]
where the second equality follows from Proposition \ref{4}(2).
Hence $\tr_RX$ coincides with either $\m$ or $R$.
By Remark \ref{3}(2) we have $X\cong\tr_RX$.

Assume $\tr_RX=R$.
Then $X\cong R$, and we find an element $y\in X$ such that $X=Ry$.
As $1\in X$, we have $1=ay$ for some $a\in R$.
Since $y\in S$, we get $\m y\subseteq\m$, which shows $a\notin\m$.
Hence $a$ is a unit of $R$, and we observe $y\in R$.
Therefore $X=R$, and $x$ is in $R$, which contradicts the choice of $x$.

Thus we have to have $\tr_RX=\m$, and get an $R$-isomorphism $X\cong\m$.
This implies that $\m$ is generated by at most two elements as an $R$-module.
Hence
$$
1=\depth R \le \dim R\le \edim R\le 2.
$$
If $\dim R=2$, then the equality $\dim R=\edim R$ holds, which means that $R$ is a regular local ring.
In particular, $R$ is Cohen--Macaulay, and it follows that $1=\depth R=\dim R=2$, which is a contradiction.
Thus $\dim R=1$, and we have $\edim R-\dim R\le1$, namely, $R$ is a hypersurface.

It remains to prove that $R$ has multiplicity at most $2$.
According to Lemma \ref{2}, it suffices to show that $\m\cong\m^2$.
The $R$-module $S$ is isomorphic to $\tr_RS$ by Remark \ref{3}(2).
It follows from Lemma \ref{5} that $\m\cong S=R:\m$.
Using Proposition \ref{4}(2), we obtain $\m=\tr_R\m=(R:\m)\m\cong\m\m=\m^2$, as desired.
(In general, if a module $X$ is isomorphic to a module $Y$, then $\a X$ is isomorphic to $\a Y$ for an ideal $\a$.)

(3)$\implies$(4) (under the assumption that $k$ is algebraically closed and has characteristic $0$):
Again, we have $\e(\widehat R)\le2$.
Cohen's structure theorem implies that $\widehat{R}$ is isomorphic to a hypersurface of the form $k[\![x,y]\!]/(f)$ with $f\in(x,y)\setminus(x,y)^3$.
Changing variables, we can reduce to the case where $f=x$ or $f=x^2$ or $f=x^2+y^t$ with $t\in\Z_{>0}$; see (i) of \cite[Proof of (8.5)]{Y} and its preceding part.
\end{proof}

\begin{rem}
Let $R$ be a ring satisfying Theorem \ref{new2}(4).
Then each ideal of $R$ is a maximal Cohen--Macaulay $R$-module.
The isomorphism classes of indecomposable maximal Cohen--Macaulay $R$-modules are completely classified; see \cite[Proposition (5.11), (9.9) and Example (6.5)]{Y}.
The implication (4)$\implies$(1) in Theorem \ref{new2} can also be proved by using this classification (although it is rather complicated).
\end{rem}

Combining our Theorems \ref{new1} and \ref{new2}, we obtain a remarkable result.

\begin{cor}\label{new4}
The Lindo--Pande condition $\lp$ implies Serre's condition $\st$.
\end{cor}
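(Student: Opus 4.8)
The plan is to prove Serre's condition $\st$ directly from its definition, using that $\lp$ passes to localizations together with the two structure theorems already established. Recall that $R$ satisfies $\st$ precisely when $\depth R_\p\ge\min\{2,\height\p\}$ for every prime ideal $\p$ of $R$. By Remark \ref{3}(3), for each such $\p$ the localization $R_\p$ again satisfies $\lp$, and it is a local ring whose maximal ideal is $\p R_\p$, with $\dim R_\p=\height\p$. Hence the whole statement reduces at once to the following assertion about a local ring: if a local ring $(R,\m)$ satisfies $\lp$, then $\depth R\ge\min\{2,\dim R\}$.

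To prove this local assertion I would argue by cases according to $d=\depth R$, the point being that $\lp$ together with small depth pins down the dimension. If $d\ge2$, there is nothing to do: since $\depth R\le\dim R$ always holds, we get $\min\{2,\dim R\}\le2\le d$. If $d=0$, then every nonunit of $R$ is a zerodivisor, so $R$ falls under the hypothesis of Theorem \ref{new1}; that theorem forces $R$ to be artinian, whence $\dim R=0$ and the required inequality $\depth R\ge\min\{2,0\}=0$ is trivial. Finally, if $d=1$, then Theorem \ref{new2} applies and shows that $R$ is a hypersurface of Krull dimension $1$; in particular $\dim R=1$, so $\min\{2,\dim R\}=1=\depth R$.

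The heart of the argument is thus not any new computation but the observation that the only way $\st$ could fail is to have a prime $\p$ with $\depth R_\p<\min\{2,\height\p\}$, that is, with $\depth R_\p\in\{0,1\}$ and $\height\p$ strictly larger; and this is exactly excluded by Theorems \ref{new1} and \ref{new2}, which guarantee that depth $0$ (resp.\ depth $1$) together with $\lp$ can only occur when the dimension is $0$ (resp.\ $1$). So the real work has already been carried out in those two theorems, and the corollary is their formal consequence once one localizes. The only mild subtlety to keep in mind is the translation ``$R_\p$ local with $\depth R_\p=0$'' $\Longleftrightarrow$ ``all nonunits of $R_\p$ are zerodivisors,'' which is precisely what licenses the application of Theorem \ref{new1} in the depth-zero case.
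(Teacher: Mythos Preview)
Your proof is correct and follows essentially the same approach as the paper's own proof: localize at a prime using Remark~\ref{3}(3), then invoke Theorems~\ref{new1} and~\ref{new2} to see that $R_\p$ is Cohen--Macaulay whenever $\depth R_\p\le1$, which is exactly what $\st$ requires. The paper's write-up is more compressed, but the logical content is identical.
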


\begin{proof}
Suppose that $R$ satisfies $\lp$.
Let $\p$ be a prime ideal of $R$.
The localization $R_\p$ also satisfies $\lp$ by Remark \ref{3}(3).
We see from Theorems \ref{new1} and \ref{new2} that $R_\p$ is Cohen--Macaulay when $\depth R_\p\le1$.
It is easy to observe from this that $R$ satisfies $\st$.
\end{proof}

Here we recall a long-standing conjecture of Huneke and Wiegand \cite{HW}.

\begin{conj}[Huneke--Wiegand]\label{hw}
Let $R$ be a $1$-dimensional local ring, and let $M$ be a torsion-free $R$-module having a rank.
If $M\otimes_RM^*$ is torsion-free, then $M$ is free.
\end{conj}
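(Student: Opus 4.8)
The plan is to attack the conjecture through the trace-ideal calculus of Section~2, after two reductions. First, since completion is faithfully flat and commutes both with the $R$-dual of a finitely generated module and with tensor products, standard descent-and-ascent arguments show that the hypotheses ``$M$ has a rank'' and ``$M\otimes_RM^{*}$ is torsion-free'', as well as the conclusion ``$M$ is free'', are all stable under passing to the completion; hence I would assume $R$ complete from the outset. Second, the problem separates according to the rank of $M$, and the essential case is that of rank one, to which the general case ought to reduce; I would isolate this rank-one case, where by Remark~\ref{fg} the module $M$ may be taken to be an ideal $I$ of $R$ containing a non-zerodivisor, and I would treat the reduction to it as a separate (and itself nontrivial) input.

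For the rank-one case the trace-ideal machinery applies directly. By Proposition~\ref{4}(1) we identify $I^{*}$ with $R:I$, and by Proposition~\ref{4}(2) the multiplication map
$$
\mu\colon I\otimes_R(R:I)\longrightarrow I(R:I)=\tr_RI
$$
is surjective. After localizing at a minimal prime, $I$ becomes free of rank one, so $\mu$ is an isomorphism there; consequently $\ker\mu$ is precisely the torsion submodule of $I\otimes_R(R:I)$. Thus the hypothesis that $I\otimes_RI^{*}$ be torsion-free is equivalent to the injectivity of $\mu$, i.e.\ to an isomorphism $I\otimes_RI^{*}\cong\tr_RI$, while the desired conclusion that $I$ be free is equivalent to $\tr_RI=R$. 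In these terms the conjecture reads: over a one-dimensional local ring, injectivity of the natural surjection $I\otimes_R(R:I)\to\tr_RI$ forces $\tr_RI=R$.

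The decisive step is to deduce $\tr_RI=R$ from the injectivity of $\mu$, and this is where I expect the genuine obstacle to lie. Setting $S=I:I=\End_R(I)$, one has $R\subseteq S$ and $\tr_RI=I(R:I)$, so the task amounts to excluding non-principal ideals for which $\mu$ is injective, which by Lemma~\ref{5} and Proposition~\ref{4} is intertwined with the structure of the birational extension $S\supseteq R$. When $R$ is a Gorenstein domain this can be pushed through, recovering Lindo's theorem \cite{L}; when $R$ satisfies $\lp$, the hypersurface-of-multiplicity-at-most-two structure forced by Theorem~\ref{new2}, together with the resulting countable classification of torsion-free modules, makes the indecomposables explicit enough to conclude. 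For a general one-dimensional local ring, however, one has neither a self-duality to exploit nor a bound on the multiplicity, and controlling the interplay between $\mu$ and $S$ in this generality is exactly the point at which the argument resists being completed; I would regard this as the principal difficulty, and it is precisely the reason the statement remains a conjecture rather than a theorem.
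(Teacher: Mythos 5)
The statement you set out to prove is Conjecture \ref{hw}, which the paper records as an \emph{open problem} of Huneke and Wiegand \cite{HW}; the paper contains no proof of it, only the special case Corollary \ref{30}, where the hypothesis that $R$ satisfies $\lp$ forces $R$ to be a one-dimensional hypersurface by Theorem \ref{new2}, after which the hypersurface case of the conjecture, proved in \cite[Theorem 3.1]{HW}, together with the descent of freeness along duals from \cite[Lemma 2.13]{hwcn}, finishes the argument. Your proposal correctly recognizes this status and, appropriately, stops short of claiming a complete proof. The reductions you do carry out are sound. Completion is harmless: faithful flatness, together with the fact that the associated primes of $\widehat R$ lie over those of $R$, makes the rank hypothesis, torsion-freeness of $M\otimes_RM^*$, and freeness all ascend and descend. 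Your reformulation of the rank-one case is also correct: with $M$ replaced via Remark \ref{fg} by an ideal $I$ containing a non-zerodivisor, the kernel of $\mu\colon I\otimes_R(R:I)\to\tr_RI$ is exactly the torsion submodule, because $\mu$ localizes to an isomorphism at every \emph{associated} prime of $R$ (associated, not merely minimal, which matters if $\depth R=0$ -- a small imprecision in your wording) and $\tr_RI\subseteq R$ is torsion-free; and over a local ring $\tr_RI=R$ is equivalent to freeness of $I$ by the free-summand criterion \cite[Proposition 2.8(iii)]{L} combined with the observation, used in the paper's proof of Theorem \ref{7}, that an ideal containing a non-zerodivisor and admitting a free summand is principal.

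Two caveats on how you frame what remains. First, the ``decisive step'' you isolate -- deducing $\tr_RI=R$ from injectivity of $\mu$ over an arbitrary one-dimensional local ring -- is not a repairable gap in your argument but the conjecture itself; nothing in this paper or in the sources it cites closes it, and your diagnosis that the known cases exploit extra structure (Gorenstein self-duality in \cite[Proposition 6.8]{L}, the hypersurface theorem of \cite{HW} behind Corollary \ref{30}) is accurate. Second, the reduction of the general conjecture to rank one, which you defer as ``a separate (and itself nontrivial) input,'' is likewise not available in the generality you would need: the reference \cite{hwcn} studies the ideal case but does not supply such a reduction. So read as a proof your write-up is doubly incomplete, and necessarily so; read as an analysis of where the difficulty sits, it is consistent with the paper, whose actual contribution on this front is precisely Corollary \ref{30}.
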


Huneke and Wiegand \cite[3.1]{HW} prove that Conjecture \ref{hw} holds for hypersurfaces.
The conjecture is widely open in general, even for ideals of complete intersection domains of codimension at least $2$.
Our theorem implies that the conjecture holds for a ring satisfying the Lindo--Pande condition.

\begin{cor}\label{30}
Let $R$ is a local ring of depth one, and suppose that $R$ satisfies $\lp$.
Let $M$ be an $R$-module having a rank.
If $M\otimes_RM^*$ is torsion-free, then $M$ is free.
In particular, Conjecture \ref{hw} holds for a ring satisfying $\lp$.
\end{cor}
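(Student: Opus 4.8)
The plan is to derive this from Theorem \ref{new2} together with the known case of the Huneke--Wiegand conjecture for hypersurfaces. First I would use that $R$ is local of depth one and satisfies $\lp$, so the implication {\rm(1)}$\implies${\rm(3)} of Theorem \ref{new2} makes $R$ a hypersurface with $\dim R=1$ and $\e(R)\le2$; in particular $R$ is a one-dimensional hypersurface. This is the crucial input, because Huneke and Wiegand proved in \cite[3.1]{HW} that Conjecture \ref{hw} holds for hypersurfaces. Consequently, for a \emph{torsion-free} module $N$ having a rank with $N\otimes_RN^*$ torsion-free, one may already conclude that $N$ is free, and the remaining task is to reduce the general rank hypothesis to this torsion-free situation.

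Next I would carry out the reduction to the torsion-free case. Let $T$ be the torsion submodule of $M$ and put $\bar M=M/T$, which is torsion-free and has the same rank $r$ as $M$. Since $R$ is torsion-free as a module over itself while $T$ is torsion, $\Hom_R(T,R)=0$, so applying $(-)^*$ to the exact sequence $0\to T\to M\to\bar M\to0$ gives an isomorphism $M^*\cong\bar M^*$. Tensoring the same sequence with $\bar M^*$ yields a right-exact sequence whose leftmost term $T\otimes_R\bar M^*$ is torsion; its image inside $M\otimes_RM^*$ is therefore torsion, and it vanishes because $M\otimes_RM^*$ is torsion-free. This produces an isomorphism $M\otimes_RM^*\cong\bar M\otimes_R\bar M^*$, so $\bar M\otimes_R\bar M^*$ is torsion-free, and the hypersurface case above shows that $\bar M$ is free.

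Finally I would promote the freeness of $\bar M$ to that of $M$. As $\bar M\cong R^{r}$ is free, the sequence $0\to T\to M\to\bar M\to0$ splits, giving $M\cong T\oplus R^{r}$ and $M^*\cong R^{r}$, whence $M\otimes_RM^*\cong T^{r}\oplus R^{r^2}$, where $T^{r}$ denotes the direct sum of $r$ copies of $T$. Since this module is torsion-free and $T^{r}$ is torsion, we get $T^{r}=0$; for $r\ge1$ this forces $T=0$ and hence $M\cong R^{r}$, while for $r=0$ the torsion-free hypothesis of Conjecture \ref{hw} gives $\bar M=0$ and so $M=0$. The last assertion of Corollary \ref{30} is then precisely the torsion-free instance, which is exactly Conjecture \ref{hw} for $R$.

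The main obstacle here is not a deep one: essentially all of the substance is packaged in Theorem \ref{new2}, which identifies $R$ as a one-dimensional hypersurface, and in the cited Huneke--Wiegand result. What genuinely requires care is the bookkeeping of the reduction over a ring that need not be a domain: one must check that ``having a rank'' still makes $\bar M$ torsion-free of the expected rank, that the torsion contribution to $M\otimes_RM^*$ really dies by torsion-freeness of the target, and that the rank-zero case is controlled by the torsion-free hypothesis supplied by Conjecture \ref{hw}.
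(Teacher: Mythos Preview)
Your argument is correct but takes a longer route than the paper's. The paper observes that \cite[Theorem~3.1]{HW} applies to $M$ directly, with no torsion-free hypothesis: over a one-dimensional hypersurface, if $M$ has rank and $M\otimes_RM^*$ is torsion-free, then $M$ and $M^*$ are automatically torsion-free and one of them is free; a short auxiliary result \cite[Lemma~2.13]{hwcn} then handles the case where it is $M^*$ that comes out free. Thus your whole reduction---passing to $\bar M=M/T$, identifying $M\otimes_RM^*\cong\bar M\otimes_R\bar M^*$, and then arguing back that $T=0$ after splitting---is rendered unnecessary by the stronger form of the Huneke--Wiegand theorem. On the other hand, your approach has the virtue of invoking only the statement of Conjecture~\ref{hw} exactly as formulated (for torsion-free modules), whereas the paper relies on the sharper input from \cite{HW} plus an extra lemma from a separate source.

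One small wrinkle: in the rank-zero case you appeal to ``the torsion-free hypothesis of Conjecture~\ref{hw}'', which is indeed available for the ``In particular'' assertion but is \emph{not} part of the hypotheses of the first sentence of the corollary. The paper's direct application of \cite[Theorem~3.1]{HW} covers this uniformly, since that theorem forces $M$ itself to be torsion-free (hence zero when the rank is zero). If you wish to keep your reduction argument, you should either note that the first assertion is vacuous for nonzero torsion $M$ only insofar as the cited Huneke--Wiegand theorem already yields torsion-freeness, or restrict the first assertion to $r\ge 1$.
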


\begin{proof}
Theorem \ref{new2} implies that $R$ is a $1$-dimensional hypersurface.
By \cite[Theorem 3.1]{HW}, both $M$ and $M^*$ are torsion-free, and either of them is free.
If $M^*$ is free, then so is $M$ by \cite[Lemma 2.13]{hwcn}.
\end{proof}

\begin{rem}
Lindo \cite[Proposition 6.8]{L} shows that Conjecture \ref{hw} holds if $R$ is a Gorenstein domain and $M$ is isomorphic to a trace ideal.
Note that our Corollary \ref{30} does not assume that $R$ is Gorenstein, or $R$ is a domain, or $M$ is torsion-free.
\end{rem}

Our next goal is to study the Lindo--Pande condition for rings having Krull dimension at least two.
The following proposition characterize the ideals of normal rings that are isomorphic to trace ideals.

\begin{prop}\label{9}
Let $M$ be a finitely generated $R$-submodule of $Q$ containing a non-zerodivisor of $R$.
Consider the following conditions.
\begin{enumerate}[\rm(1)]
\item
$M$ is isomorphic to a trace ideal of $R$.
\item
$M^*$ is isomorphic to $R$.
\item
$M$ is isomorphic to an ideal $I$ of $R$ with $\grade I\ge2$ (i.e. $\Ext_R^i(R/I,R)=0$ for $i<2$).
\end{enumerate}
Then the implications {\rm(1)} $\Longleftarrow$ {\rm(2)} $\Longleftrightarrow$ {\rm(3)} hold.
All the three conditions are equivalent if $R$ is normal.
\end{prop}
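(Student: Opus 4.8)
The plan is to prove the three implications that are not formal, namely $(2)\Rightarrow(1)$, the equivalence $(2)\Leftrightarrow(3)$, and, under the normality hypothesis, $(1)\Rightarrow(2)$ (which closes the loop). Throughout I would work inside $Q$ and translate everything about $M^*$ and about ideals into the colon calculus of Proposition \ref{4}, being careful to distinguish non-zerodivisors of $R$ from non-zerodivisors of $Q$. The implication $(2)\Rightarrow(1)$ is the quickest: assuming $M^*\cong R$, Proposition \ref{4}(1) gives $R:M\cong M^*\cong R$, so $R:M$ is cyclic free, say $R:M=R\alpha$ with $\alpha\in Q$. I would first verify that $\alpha$ is a non-zerodivisor of $Q$: by Remark \ref{fg} the module $R:M$ contains a non-zerodivisor $c$ of $R$ (a common denominator with $cM\subseteq R$), so $c=r\alpha$ for some $r\in R$, and since $c$ is a unit in $Q$, any $q$ with $q\alpha=0$ satisfies $qc=0$, hence $q=0$. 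Then $\tr M=(R:M)M=\alpha M$ by Proposition \ref{4}(2), and multiplication by $\alpha$ is an isomorphism $M\cong\alpha M=\tr M$; as $\tr M$ is a trace ideal, $(1)$ follows.

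For $(2)\Leftrightarrow(3)$ the main tool is the identification, for an ideal $I$ containing a non-zerodivisor, of $\Ext^1_R(R/I,R)$ with $(R:I)/R$. Applying $\Hom_R(-,R)$ to $0\to I\to R\to R/I\to0$, using $\ann_R I=0$ and Proposition \ref{4}(1), yields an exact sequence $0\to R\to I^*\to\Ext^1_R(R/I,R)\to0$ in which $R\to I^*$ is the inclusion $R\subseteq R:I$. Thus $\grade I\ge2$ is equivalent to $R:I=R$, and in that case $I^*\cong R:I=R$. For $(3)\Rightarrow(2)$ this reads off $M^*\cong I^*\cong R$ at once. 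For $(2)\Rightarrow(3)$ I would take $I=\tr M=\alpha M$ from the previous paragraph and compute the colon directly: $q\alpha M\subseteq R$ means $q\alpha\in R:M=R\alpha$, and since $\alpha$ is a non-zerodivisor of $Q$ this forces $q\in R$, so $R:I=R$. As $I$ contains the non-zerodivisor $\alpha c$, the displayed sequence gives $\grade I\ge2$, while $M\cong I$ by construction.

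Under the hypothesis that $R$ is normal I must establish $(1)\Rightarrow(2)$. By Remark \ref{3}(1) I may replace $M$ by the trace ideal $I=\tr M$, which contains a non-zerodivisor and satisfies $I=\tr I$, so Proposition \ref{4}(3) gives the key equality $I:I=R:I$. Now the ring $S=I:I=\End_R(I)$ is a module-finite extension of $R$ inside $Q$, so every element of $S$ is integral over $R$ by the determinant trick applied to the finitely generated faithful module $I$. Since a normal ring is integrally closed in its total quotient ring, this forces $S\subseteq R$, and as $R\subseteq S$ always holds we get $S=R$. Hence $R:I=R$, so $I^*\cong R$ and therefore $M^*\cong R$, which is $(2)$.

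I expect the normal case to be the main obstacle. The delicate points there are, first, that ``normal'' must be used in the sense of being integrally closed in $Q$ so that the reducible (product-of-normal-domains) situation is covered, and second, that the endomorphism ring $I:I$ genuinely is integral over $R$, which is exactly where the finite generation and faithfulness of $I$ are needed. The rest is a matter of careful non-zerodivisor bookkeeping: the passage between non-zerodivisors of $R$ and of $Q$ is routine but essential, since it is precisely what validates the colon computation $R:(\alpha M)=R$ underlying $(2)\Rightarrow(3)$.
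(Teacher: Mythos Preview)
Your argument is correct, with one small slip: in the clause ``As $I$ contains the non-zerodivisor $\alpha c$'', the element $c$ you introduced lies in $R:M$, not in $M$, so $\alpha c$ need not belong to $I=\alpha M$. Replace $c$ there by any non-zerodivisor $m\in M$ (which exists by hypothesis); then $\alpha m\in I$ is a unit of $Q$ lying in $R$, hence a non-zerodivisor of $R$, and the rest goes through.

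The interesting divergence from the paper is in $(2)\Rightarrow(3)$. The paper first replaces $M$ by an isomorphic ideal $J$ and argues iteratively: from $J^*\cong R$ it extracts $\Ext^1_R(R/J,R)\cong R/(x_1)$ with $x_1$ a non-zerodivisor, whence $J=x_1J_1$ for some ideal $J_1\cong J$; repeating this and invoking the noetherian hypothesis to terminate the ascending chain $J\subseteq J_1\subseteq J_2\subseteq\cdots$ eventually produces an ideal of grade at least $2$. Your route bypasses this induction entirely: you take $I=\tr M=\alpha M$ from the outset and compute $R:I=R$ directly from $R:M=R\alpha$ with $\alpha$ a unit of $Q$, so $\grade I\ge2$ in one stroke. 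This is shorter and makes the choice of $I$ canonical (it is exactly the trace ideal), whereas the paper's argument has the mild bonus of exhibiting every ideal $J$ with $J^*\cong R$ as a principal multiple of an ideal of grade $\ge2$. The remaining implications---$(3)\Rightarrow(2)$, $(2)\Rightarrow(1)$, and the normal-case $(1)\Rightarrow(2)$ via $I:I=R:I$ and integrality of the module-finite extension $I:I$ over $R$---match the paper's proof essentially verbatim.
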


\begin{proof}
In view of Remark \ref{fg}, we can replace $M$ with an ideal $J$ of $R$ containing a non-zerodivisor.

(3)$\implies$(2):
Dualizing the natural short exact sequence $0\to I\to R\to R/I\to0$ by $R$ induces $I^*\cong R$.

(2)$\implies$(1):
Using Proposition \ref{4}(1)(2), we have $R:J\cong J^*\cong R$, and $\tr J=(R:J)J\cong RJ=J$.

(2)$\implies$(3):
If $J=R$, then we have $\grade J=\infty\ge2$ and are done.
Let $J\ne R$.
Then $(R/J)^*=0$, and dualizing the natural exact sequence $0\to J\to R\to R/J\to0$ gives an exact sequence $0\to R\to J^*\to\Ext_R^1(R/J,R)\to0$.
Combining this with the isomorphism $J^*\cong R$, we find a non-zerodivisor $x_1$ of $R$ such that $\Ext_R^1(R/J,R)\cong R/(x_1)$.
As $J$ annihilates the Ext module, it is contained in the ideal $(x_1)$.
Hence we find an ideal $J_1$ of $R$ such that $J=x_1J_1$.
It is easy to see that $J_1$ also contains a non-zerodivisor of $R$.
As $J_1$ is isomorphic to $J$, we have $J_1^*\cong R$.
Thus the argument for $J$ applies to $J_1$.
If $J_1=R$, then we are done.
If $J_1\ne R$, then we find an ideal $J_2$ and a non-zerodivisor $x_2$ with $J_1=x_2J_2$.
Iterate this procedure, and consider the case where we get ideals $J_i$ and non-zerodivisors $x_i$ such that $J_i=x_{i+1}J_{i+1}$ for all $i\ge0$.
In this case, there is a filtration of ideals of $R$:
$$
J=:J_0\subseteq J_1\subseteq J_2\subseteq J_3\subseteq \cdots.
$$
As $R$ is noetherian, this stabilizes: there exists an integer $t\ge0$ such that $J_t=J_{t+1}$.
Hence $J_{t+1}=x_{t+1}J_{t+1}$, and Nakayama's lemma gives rise to an element $r\in R$ such that $1-r\in(x_{t+1})$ and $rJ_{t+1}=0$.
The fact that $J_{t+1}$ contains a non-zerodivisor forces $r$ to be zero, and $x_{t+1}$ is a unit of $R$.
Therefore $\Ext_R^1(R/J_t,R)\cong R/(x_{t+1})=0$, and thus $\grade J_t\ge2$.
It remains to note that $J$ is isomorphic to $J_t$.

Finally, we prove the implication (1)$\implies$(2) under the additional assumption that $R$ is normal.
By Remark \ref{3}(1) the ideal $J$ is isomorphic to its trace $I:=\tr J$.
As $J\subseteq I$ by \cite[Proposition 2.8(iv)]{L}, the ideal $I$ contains a non-zerodivisor of $R$.
We have $\tr I=\tr(\tr J)=\tr J=I$ by \cite[Proposition 2.8(iv)]{L} again.
Using (1) and (3) of Proposition \ref{4}, we get $I^*\cong R:I=I:I$.
The ring $I:I$ is a module-finite extension of $R$ in $Q$, and hence it is integral over $R$.
Since $R$ is normal, we have $I:I=R$.
We thus obtain $J^*\cong I^*\cong I:I=R$.
\end{proof}

The above proposition yields a characterization of the normal domains that satisfy the Lindo--Pande condition.
For a normal domain $R$ we denote by $\cl(R)$ the {\em divisor class group} of $R$.

\begin{cor}\label{10}
A ring $R$ is a normal domain satisfying $\lp$ if and only if it is factorial.
\end{cor}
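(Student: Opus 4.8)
The plan is to derive this corollary directly from the characterization in Proposition \ref{9}, combined with the classical dictionary between factoriality and the divisor class group. The crucial point is that over a domain every nonzero ideal contains a non-zerodivisor, so Proposition \ref{9} applies to it; since $R$ is assumed normal, all three conditions there are equivalent, and in particular the equivalence ``$I$ is isomorphic to a trace ideal $\iff I^*\cong R$'' holds for every nonzero ideal $I$. I would therefore first reformulate the Lindo--Pande condition for a normal domain as the single statement that $I^*\cong R$ for every nonzero ideal $I$ of $R$, the zero ideal being its own trace.

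For the forward implication I would assume $R$ is a normal domain satisfying $\lp$ and let $I$ be a divisorial (equivalently, reflexive) ideal. Using Remark \ref{fg} I may represent its class by a nonzero integral ideal, still reflexive since reflexivity is an isomorphism invariant. The reformulation gives $I^*\cong R$, and reflexivity of $I$ makes the natural map $I\to I^{**}$ an isomorphism, so $I\cong I^{**}=(I^*)^*\cong R^*=R$. Thus every divisorial ideal is principal, whence $\cl(R)=0$; a normal (hence Krull) domain with trivial divisor class group is factorial.

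For the converse I would assume $R$ factorial, so that $R$ is normal with $\cl(R)=0$. Given a nonzero ideal $I$, the dual $I^*$ is a reflexive module of rank one over the normal domain $R$, hence represents an element of $\cl(R)$; as this group is trivial, $I^*$ is free of rank one, i.e. $I^*\cong R$. The implication $(2)\Rightarrow(1)$ of Proposition \ref{9} then shows $I$ is isomorphic to a trace ideal, and the zero ideal coincides with its own trace, so $R$ satisfies $\lp$.

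The step requiring the most care is the passage between reflexive rank-one modules and $\cl(R)$ in both directions: that over a noetherian normal (Krull) domain the divisorial fractional ideals modulo principal ones, equivalently the isomorphism classes of reflexive rank-one modules, form exactly $\cl(R)$; that $I^*$ is always reflexive; and that factoriality is equivalent to the vanishing of this group. These are standard facts, so the genuine work reduces to citing them, with all the analytic content already supplied by Proposition \ref{9}. I would close by noting that this argument genuinely strengthens Proposition \ref{20}: triviality of $\Pic R$ alone only controls the locally free rank-one modules, whereas factoriality demands the vanishing of the full class group, which is precisely what the reflexive version of Proposition \ref{9} delivers.
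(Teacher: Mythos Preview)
Your proposal is correct and follows essentially the same route as the paper: both reduce $\lp$ for a normal domain, via Proposition \ref{9}, to the condition that $I^*\cong R$ for every nonzero ideal $I$, and then translate this into the vanishing of $\cl(R)$ using that reflexive rank-one modules represent divisor classes. The only substantive difference is that the paper gives a short self-contained argument that $I^*$ is reflexive of rank one (using that $(\rho(I))^*\circ\rho(I^*)$ is the identity on $I^*$), whereas you invoke this as a standard fact about duals over normal domains; both are fine.
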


\begin{proof}
Let $R$ be a normal domain.
Then it follows from \cite[Proposition 6.1]{F} that $R$ is factorial if and only if $\cl(R)=0$.
The zero ideal is a trace ideal as $0=\tr0$.
Applying Proposition \ref{9}, we observe that $R$ satisfies $\lp$ if and only if $I^*\cong R$ for all ideals $I\ne0$.
Therefore we have only to show the following two statements (see \cite[2.10]{S}).
\begin{enumerate}[(a)]
\item
Suppose that $I^*$ is isomorphic to $R$ for every nonzero ideal $I$ of $R$.
Let $M$ be a finitely generated reflexive $R$-module of rank one.
Then $[M]=0$ in $\cl(R)$.
\item
Let $I$ be a nonzero ideal of $R$.
Then $I^*$ is a reflexive module of rank one.
\end{enumerate}

(a): As $M$ has rank $1$ and is torsion-free, it is isomorphic to an ideal $I\ne0$ of $R$.
Then $I^*$ is isomorphic to $R$ by assumption, and we get isomorphisms $M\cong M^{**}\cong I^{**}\cong R^*\cong R$.
Hence $[M]=0$ in $\cl(R)$.

(b): The module $I$ has rank $1$, and so does $I^*$.
For each $R$-module $X$, denote by $\rho(X)$ the canonical homomorphism $X\to X^{**}$.
We can directly verify that the composition $(\rho(I))^*\circ\rho(I^*)$ is the identity map of $I^*$.
Hence $I^{***}\cong I^*\oplus E$ for some $R$-module $E$.
Comparing the ranks, we see that $E$ is torsion.
As $E$ is isomorphic to a submodule of the torsion-free module $I^{***}$, it is zero.
Therefore $I^*$ is reflexive.
\end{proof}

What we want to do next is to remove from the above corollary the assumption that $R$ is a normal domain.
For this, we need to investigate the Lindo--Pande condition for a finite product of rings.

\begin{lem} \label{new3}
Let $R_1,\dots R_n$ be rings.
Then the product ring $R_1\times\cdots\times R_n$ satisfies $\lp$ if and only if $R_i$ satisfies $\lp$ for all $1\le i\le n$.
\end{lem}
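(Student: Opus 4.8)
The plan is to reduce everything to the standard decomposition of modules and ideals over a product ring, the crucial point being that the trace construction is compatible with this decomposition. Write $R=R_1\times\cdots\times R_n$ and let $e_1,\dots,e_n$ be the complete set of orthogonal idempotents with $e_i$ the identity of $R_i$. First I would record the canonical splitting: every $R$-module $M$ decomposes as $M=e_1M\oplus\cdots\oplus e_nM$, where $M_i:=e_iM$ is an $R_i$-module via the identification $R_i=e_iR$. In particular every ideal $I$ of $R$ has the form $I=I_1\times\cdots\times I_n$ with $I_i$ an ideal of $R_i$, and two $R$-modules $M,N$ are isomorphic precisely when $e_iM\cong e_iN$ as $R_i$-modules for each $i$.

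The key step is to prove the trace identity
\[
\tr_RM=\tr_{R_1}M_1\times\cdots\times\tr_{R_n}M_n.
\]
For this I would first observe that any $R$-homomorphism carries $e_iM$ into $e_iN$, so that $\Hom_R(M,N)=\prod_i\Hom_{R_i}(M_i,N_i)$; taking $N=R$ gives $M^*=\prod_iM_i^*$. Next, since $e_i$ annihilates $M_j$ for $i\ne j$, the cross terms $M_i^*\otimes_RM_j$ vanish and the tensor product decomposes as $M^*\otimes_RM=\bigoplus_i(M_i^*\otimes_{R_i}M_i)$. Under this splitting the evaluation map $\lambda_M^R$ becomes the direct sum of the maps $\lambda_{M_i}^{R_i}$, with the $i$-th summand landing in the factor $e_iR=R_i$. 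Comparing images then yields the displayed identity, since $\tr_RM$ is by definition the image of $\lambda_M^R$.

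Granting the trace identity, both implications follow formally from the reformulation of $\lp$ in Remark \ref{3}(1), namely that a ring satisfies $\lp$ exactly when $I\cong\tr I$ for every ideal $I$. For the ``if'' direction, given an ideal $I=I_1\times\cdots\times I_n$ of $R$ with each $R_i$ satisfying $\lp$, one has $I_i\cong\tr_{R_i}I_i$ for every $i$, and assembling these componentwise gives $I\cong\tr_{R_1}I_1\times\cdots\times\tr_{R_n}I_n=\tr_RI$, so $R$ satisfies $\lp$. For the ``only if'' direction, fix an index $i$ and an ideal $I_i$ of $R_i$, and apply $\lp$ for $R$ to the ideal $I$ that has $I_i$ in the $i$-th coordinate and $0$ in all others. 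The resulting isomorphism $I\cong\tr_RI$ projects in the $i$-th coordinate to an isomorphism $I_i\cong\tr_{R_i}I_i$ (and to $0\cong0$ elsewhere), whence $R_i$ satisfies $\lp$.

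I expect the only genuine content to be the trace identity of the second paragraph; once the compatibility of $\Hom$, $\otimes_R$, and hence $\lambda_M^R$ with the idempotent decomposition is checked, the equivalence is pure bookkeeping. The mild subtlety to watch is the behaviour of the tensor product over the product ring---ensuring that the cross terms $M_i^*\otimes_RM_j$ with $i\ne j$ vanish---but this is immediate from $e_iM_j=0$.
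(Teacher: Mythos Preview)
Your proposal is correct and follows essentially the same approach as the paper: both arguments hinge on the trace identity $\tr_R(\prod I_i)=\prod\tr_{R_i}I_i$, deduced from the compatibility of $\Hom$ and $\otimes$ with the product decomposition, and then derive the two implications via Remark~\ref{3}(1), handling the ``only if'' direction by applying the identity to the ideal with $I_i$ in one slot and $0$ elsewhere. Your version is a bit more explicit in spelling out the idempotent decomposition and the vanishing of the cross terms $M_i^*\otimes_RM_j$, whereas the paper simply invokes the tensor-abelian equivalence $\prod_i\Mod R_i\cong\Mod(\prod_iR_i)$, but the content is the same.
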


\begin{proof}
The assignment $(M_1,\dots,M_n)\mapsto M_1\times\cdots\times M_n$ gives an equivalence
$$
\textstyle\prod_{i=1}^n(\Mod R_i)\cong\Mod(\prod_{i=1}^nR_i)
$$
as tensor abelian categories, where for a ring $A$ we denote by $\Mod A$ the category of arbitrary $A$-modules.
In particular, we can do the identification
$$
\textstyle\prod\Hom_{R_i}(M_i,N_i)=\Hom_{\prod R_i}(\prod M_i,\prod N_i),\qquad
\prod(M_i\otimes_{R_i}N_i)=\prod M_i\otimes_{\prod R_i}\prod N_i.
$$
Now it is easy to see that for all ideals $I_i$ of $R_i$ with $1\le i\le n$, one has
\begin{equation}\label{18}
\textstyle\tr_{\prod R_i}(\prod I_i)=\prod\tr_{R_i}I_i.
\end{equation}
The ``if'' part of the lemma directly follows from \eqref{18} (see Remark \ref{3}(1)).
Applying \eqref{18} to the ideal $0\times\cdots\times0\times I_i\times0\times\cdots\times0$ of $\prod R_i$ shows the ``only if'' part.
\end{proof}

Now we have reached our third (final) goal of this section, which is to give a criterion for a certain class of rings with Krull dimension at least two to satisfy the Lindo--Pande condition.

\begin{thm}\label{7}
Assume that all maximal ideals of $R$ have height at least $2$.
Then $R$ satisfies $\lp$ if and only if $R$ is a 
 product of factorial rings.
In particular, when $R$ is a local ring or an integral domain, it satisfies $\lp$ if and only if it is factorial.
\end{thm}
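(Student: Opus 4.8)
The substantial content is the implication $(1)\Rightarrow(2)$; the converse and the final assertion are short, so the plan is to dispose of them first. For $(2)\Rightarrow(1)$, suppose $R\cong R_1\times\cdots\times R_n$ with each $R_i$ factorial. A factorial ring is a normal domain, so Corollary \ref{10} shows each $R_i$ satisfies $\lp$, and then Lemma \ref{new3} shows the product $R$ satisfies $\lp$; note that the height hypothesis is not needed here. For the ``in particular'' clause, a local ring and an integral domain each have only trivial idempotents, so any decomposition into a product of factorial rings must be trivial ($n=1$); thus the equivalence specializes to ``$\lp$ if and only if factorial''.

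For the implication $(1)\Rightarrow(2)$, assume $R$ satisfies $\lp$ and that every maximal ideal has height at least $2$. By Corollary \ref{new4}, $R$ satisfies $\st$. The plan is to prove that $R$ is \emph{normal}; granting this, $R$ is a finite direct product $R=R_1\times\cdots\times R_n$ of normal domains (a normal noetherian ring decomposes along its finitely many connected components), each direct factor $R_i$ again satisfies $\lp$ by Lemma \ref{new3}, and Corollary \ref{10} then shows each $R_i$ to be factorial, as required. By Serre's criterion, normality of the $\st$-ring $R$ is equivalent to the condition $(R_1)$, namely that $R_\p$ is regular for every prime $\p$ with $\height\p\le1$. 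Thus the whole problem is reduced to proving this regularity.

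So I would fix a prime $\p$ with $\height\p\le1$ and aim to show $R_\p$ is regular. By Remark \ref{3}(3) the localization $R_\p$ satisfies $\lp$, and by $\st$ it is Cohen--Macaulay of dimension at most $1$; hence Theorem \ref{new1} (when $\height\p=0$) and Theorem \ref{new2} (when $\height\p=1$) identify $R_\p$ as either an artinian Gorenstein ring or a $1$-dimensional hypersurface of multiplicity at most $2$. What remains—and this is the main obstacle—is to \emph{upgrade} these to ``regular'', i.e.\ to a field, respectively a discrete valuation ring; the local structure alone is insufficient, and one must make the standing hypothesis bite, the point being that since every maximal ideal has height at least $2$, such a $\p$ is never maximal. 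For the height-$1$ case the natural tool is the endomorphism ring of a trace ideal: if $R_\p$ were not regular, then for $I:=\tr_R\p$ (so $I\cong\p$ and $I=\tr_RI$ by Remark \ref{3}(1)) the birational extension
$$
S:=I:I=R:I\supseteq R
$$
is module-finite and satisfies $S_\p\ne R_\p$, whence $S\ne R$; on the other hand $\lp$ applied to the finitely generated submodule $S$ of $Q$ yields $S\cong\tr_RS=(R:S)S$ via Remark \ref{3}(2) and Proposition \ref{4}(2). I would then try to force a contradiction from the coexistence of a \emph{proper} module-finite birational extension $S\supsetneq R$ with this isomorphism, using that an $\st$-ring satisfies $R=\bigcap_{\height\q=1}R_\q$ so that $S$ must already differ from $R$ in codimension one, and exploiting the height-$\ge2$ hypothesis at the maximal ideals lying over $\p$ to conclude $S=R$. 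The complementary height-$0$ case (forcing $R_\p$ to be reduced, hence a field) escapes this mechanism and would be handled by a companion argument—testing $\lp$ on principal ideals generated by nilpotents, whose traces are the double annihilators $\ann\ann(x)$, and showing these cannot stay principal once $\p$ is non-maximal. Ruling out all such proper birational endomorphism rings, and the corresponding non-principal double annihilators, is where the real difficulty lies.
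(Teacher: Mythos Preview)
Your reductions are sound and match the paper: the converse via Corollary \ref{10} and Lemma \ref{new3}, the ``in particular'' via idempotent-triviality, the reduction of $(1)\Rightarrow(2)$ to normality, and then to Serre's $(\r_1)$ after invoking Corollary \ref{new4}. The identification of the key obstacle---upgrading ``artinian Gorenstein / hypersurface of multiplicity $\le2$'' to ``regular'' at primes $\p$ of height $\le1$, using that such $\p$ are non-maximal---is also correct.

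The gap is that you do not supply the mechanism for this upgrade, and the direction you sketch (proper birational $S\supsetneq R$ combined with the $\st$-intersection $R=\bigcap_{\height\q=1}R_\q$, plus an unspecified ``companion argument'' with double annihilators for the height-$0$ case) is not an argument yet. In particular, from $S\cong\tr_RS$ alone there is no evident contradiction with $S\supsetneq R$: the trace $\tr_RS=(R:S)S$ is the conductor ideal, and there is no general reason an overring cannot be $R$-isomorphic to its conductor. Your appeal to the intersection formula only tells you $S$ differs from $R$ at some height-$1$ prime, which you already know (namely at $\p$), so it does not close the loop. Likewise, for $\height\p=0$ the double-annihilator idea is not developed and it is unclear how non-maximality of $\p$ enters.

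The paper's argument supplies precisely the missing mechanism, and it is of a different flavor: depth comparisons rather than birational bookkeeping. One first passes to a prime $\q\supsetneq\p$ with $\height\q/\p=1$ and localizes there, so that $(R,\m)$ is local with $\depth R\ge2$ and $\dim R/\p=1$. In the height-$1$ case, a depth-lemma argument forces the \emph{equality} $\p=\tr\p$ (not merely the isomorphism), one checks $\p$ is reflexive, and then Lemma \ref{5} gives an $S$-isomorphism $\p\cong S$, i.e.\ $\p=xS$. If $\p\ne xR$, Krull's intersection theorem produces an auxiliary ideal $I=\p\cap(\m^t+xR)\subsetneq\p$ with $\tr I=\p$, whence $I\cong\p$ by $\lp$, contradicting $\depth I=1\ne2=\depth\p$. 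The height-$0$ case is dispatched by a short grade argument showing $(R/\p)^*=0$, which is impossible for a minimal prime. None of these depth/grade steps appear in your outline, and they are what make the height hypothesis bite.
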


\begin{proof}
The ``if'' part follows from Corollary \ref{10} and Lemma \ref{new3}.
To prove the ``only if'' part, it suffices to show that $R$ is normal.
Indeed, suppose that it is done.
Then $R$ is a product $R_1\times \cdots\times R_n$ of normal domains; see \cite[Page 64, Remark]{Mats}.
By Lemma \ref{new3} and Corollary \ref{10}, each $R_i$ is factorial, and the proof is completed.

So let us show that $R$ is normal.
As $R$ satisfies $\st$ by Corollary \ref{new4}, it is enough to verify that $R$ satisfies $(\r_1)$.
Fix a prime ideal $\p$ of $R$ with $\height\p\le1$.
What we want to show is that $R_\p$ is a regular local ring.
By assumption, $\p$ is not a maximal ideal, and we find a prime ideal $\q$ containing $\p$ with $\height\q/\p=1$.

(i) We begin with considering the case where $\height\p=1$.
In this case, $\height\q\ge2$.
Note that $\st$ localizes, and so does $\lp$ by Remark \ref{3}(3).
Replacing $R$ with $R_\q$, we may assume that $(R,\m)$ is a local ring with $\dim R=\height\m\ge2$ and $\dim R/\p=\height\m/\p=1$.
Then $R/\p$ is a $1$-dimensional Cohen--Macaulay local ring.
Since $R$ satisfies $\st$, we have $\depth R\ge2$ and $\p$ contains a non-zerodivisor of $R$; see \cite[Proposition 1.2.10(a)]{BH}.
To show that $R_\p$ is regular, it suffices to prove that $R_\p$ has embedding dimension at most one.

Let us consider the case $\tr\p=R$.
Then $\p$ contains a nonzero free summand; see \cite[Proposition 2.8(iii)]{L}.
We find a non-zerodivisor $x$ of $R$ in $\p$ and a subideal $I$ of $\p$ such that $\p=(x)\oplus I$.
Since $(x)\cap I=0$, we have $xI=0$, and $I=0$ as $x$ is a non-zerodivisor.
Thus $\p=(x)$.
In particular, we have $\edim R_\p\le1$, which is what we want.
Consequently, we may assume that $\tr\p$ is a proper ideal of $R$.

We claim $\p=\tr\p$.
Indeed, $\tr\p$ contains $\p$ by \cite[Proposition 2.8(iv)]{L}.
Suppose that the containment is strict.
Then $\tr\p$ is $\m$-primary as $\height\m/\p=1$.
Apply the depth lemma to the natural exact sequences
$$
0\to\p\to R\to R/\p\to0,\qquad
0\to\tr\p\to R\to R/\tr\p\to0.
$$
We observe $\depth\p=2$ and $\depth(\tr\p)=1$.
Our assumption that $R$ satisfies $\lp$ and Remark \ref{3}(1) imply that $\p\cong\tr\p$, which gives a contradiction.
Thus the claim follows.

Next, we claim that $\p$ is reflexive.
In fact, let $P$ be a prime ideal of $R$.
According to \cite[Proposition 1.4.1]{BH}, it is enough to check the following.
\begin{enumerate}[\quad(a)]
\item
If $\depth R_P\le1$, then $\p R_P$ is a reflexive $R_P$-module.
\item
If $\depth R_P\ge2$, then $\depth\p R_P\ge2$.
\end{enumerate}
If $P$ does not contain $\p$, then $\p R_P=R_P$.
If $P$ contains $\p$, then $P$ coincides with $\p$ or $\m$ as $\height\m/\p=1$.
Recall that $\depth R\ge2$ and $\depth\p=2$.
The fact that $R$ satisfies $\st$ especially says $\depth R_\p=1$.
Theorem \ref{new2} and Remark \ref{3}(3) imply that $R_\p$ is a Gorenstein local ring of dimension $1$, whence $\p R_\p$ is a reflexive $R_\p$-module.
We now easily see that (a) and (b) hold, and the claim follows.

Set $S=\p:\p$.
Then $S=R:\p$ by the above first claim and Proposition \ref{4}(3).
It follows from the condition $\lp$, Remark \ref{3}(2), Lemma \ref{5} and the above two claims that $S\cong\tr_RS=\p$.
Thus we obtain an $S$-isomorphism $\p\cong S$; see Remark \ref{6}.
The ideal $\p$ contains a non-zerodivisor $x$ of $S$ such that $\p=xS$.
Note that $x$ is also a non-zerodivisor of $R$.
If $\p=xR$, then $\edim R_\p\le1$ and we are done.

Now, let us suppose that $\p\ne xR$, and derive a contradiction.
Krull's intersection theorem shows $\bigcap_{i>0}(\m^i+xR)=xR$, which implies $\p\nsubseteq\m^t+xR$ for some $t>0$.
Put $I=\p\cap(\m^t+xR)$.
Notice that $I$ contains the non-zerodivisor $x$ of $R$ and is strictly contained in $\p$.

We claim that $\p=\tr I$.
Indeed, we have
$$
\tr I=(R:I)I=(R:\p)I\subseteq(R:\p)\p=\tr\p=\p=xS=(R:\p)x\subseteq(R:\p)I=\tr I.
$$
Here, the first and third equalities follow from Proposition \ref{4}(2).
Consider the exact sequence $0\to I\xrightarrow{f}\p\to\p/I\to0$, where $f$ is the inclusion map.
Note that $\p/I$ has finite length.
As $\depth R\ge2$, the map $f^*:\p^*\to I^*$ is an isomorphism.
It is observed from this and Proposition \ref{4}(1) that the equality $(R:I)I=(R:\p)I$ appearing above holds.

This claim, the condition $\lp$ and Remark \ref{3}(1) imply $\p\cong I$.
Applying the depth lemma to the exact sequence $0\to I\to\p\to\p/I\to0$ shows that $I$ has depth $1$ as an $R$-module.
However, the $R$-module $\p$ has depth $2$, and we obtain a desired contradiction.
Thus, the proof is completed in the case $\height\p=1$.

(ii) Now we consider the case where $\height\p=0$.
We have $\height\q\ge1$.
If $\height\q=1$, then $R_\q$ is regular by (i), and so is $R_\p=(R_\q)_{\p R_\q}$, which is what we want.
Assume $\height\q\ge2$, and let us derive a contradiction.
As in (i), replacing $R$ with $R_\q$, we may assume that $(R,\m)$ is a local ring with $\depth R\ge2$ and $R/\p$ is a Cohen--Macaulay local ring of dimension $1$.
Choosing an element $y\in\m\setminus\p$, we get an exact sequence $0\to R/\p\xrightarrow{y} R/\p\to R/\p+(y)\to0$.
Since $R/\p+(y)$ has finite length and $R$ has depth at least two, taking the $R$-dual yields the isomorphism $(R/\p)^*\xrightarrow{y}(R/\p)^*$.
Nakayama's lemma implies $(R/\p)^*=0$.
Hence $\p$ has positive grade, but this contradicts the fact that $\p$ is a minimal prime.
\end{proof}

As an application of the above theorem, we observe that the Lindo--Pande condition does not necessarily ascend along the completion map $R\to\widehat R$ for a local ring $R$.

\begin{cor}
Let $R$ be a local ring.
If $\widehat R$ satisfies $\lp$, then so does $R$.
The converse also holds if $\depth R\leq 1$, but does not necessarily hold if $\depth R\ge2$.
\end{cor}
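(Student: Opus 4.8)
The plan is to treat the three assertions in the corollary separately. The first implication---that $\widehat R$ satisfying $\lp$ forces $R$ to satisfy $\lp$---is already recorded in Remark \ref{3}(3), so nothing new is needed there; I would simply quote it.

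For the converse under the hypothesis $\depth R\le 1$, I would split into two cases according to the value of $\depth R$. If $\depth R=0$, then $\m\in\ass R$, so every element of $\m$ is a zerodivisor; as $R$ is local its nonunits are exactly the elements of $\m$, and hence all nonunits of $R$ are zerodivisors. Theorem \ref{new1} then applies: since $R$ satisfies $\lp$, it is artinian and Gorenstein. An artinian local ring is $\m$-adically complete (its maximal ideal is nilpotent), so $\widehat R=R$, which satisfies $\lp$. If $\depth R=1$, there is nothing to prove beyond invoking Theorem \ref{new2}: the equivalence of its conditions (1) and (2) says precisely that $R$ satisfies $\lp$ if and only if $\widehat R$ does.

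For the failure of the converse when $\depth R\ge2$, the key observation is that the problem is forced into a question about factoriality. Suppose $R$ is a local ring with $\dim R\ge2$; then its maximal ideal has height at least two, and the same holds for the local completion $\widehat R$, which has the same dimension. Applying Theorem \ref{7} to both $R$ and $\widehat R$, each of these rings satisfies $\lp$ if and only if it is factorial. Thus it suffices to produce a local domain $R$ with $\dim R\ge2$ that is factorial but whose completion $\widehat R$ is not; since a factorial ring is a normal domain and therefore satisfies $\st$, such an $R$ automatically has $\depth R\ge2$, so it lies in the required range. Such rings are classical: there exist factorial noetherian local domains of dimension two whose completions fail to be factorial (see, e.g., \cite{F} and the references therein). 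For any such $R$, Theorem \ref{7} yields that $R$ satisfies $\lp$ while $\widehat R$ does not.

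The main obstacle is the last step: exhibiting a concrete factorial local domain with non-factorial completion. The reduction via Theorem \ref{7} is formal, and the first two parts are immediate from the earlier theorems; the genuine content lies in having such an example in hand. I would either cite a standard construction of a local unique factorization domain whose divisor class group grows under completion, or verify factoriality of $R$ and non-factoriality of $\widehat R$ directly through a class-group computation.
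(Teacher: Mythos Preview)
Your proposal is correct and follows essentially the same route as the paper: Remark \ref{3}(3) for the descent, Theorems \ref{new1} and \ref{new2} for the ascent in depth at most one, and a reduction via Theorem \ref{7} to the existence of a factorial local ring with non-factorial completion. The only substantive difference is in the last step, where the paper names a specific example---Ogoma's $2$-dimensional factorial Cohen--Macaulay local domain that has no canonical module \cite{Og}---and argues via \cite[Corollaries 3.3.8 and 3.3.19]{BH} that its completion cannot be Gorenstein, hence cannot be factorial (a factorial local ring of dimension $2$ being a normal Cohen--Macaulay domain of type $1$); your version leaves the example unspecified and points loosely to \cite{F}, which is not the right reference for such constructions.
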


\begin{proof}
The descent of $\lp$ is included in Remark \ref{3}(3), while the ascent for $\depth R\le1$ is observed from Theorems \ref{new1} and \ref{new2}.
There exists a factorial local ring $R$ of depth $2$ whose completion is not factorial.
In fact, Ogoma's famous example \cite{Og} of a $2$-dimensional factorial local ring without a canonical module is such a ring by \cite[Corollaries 3.3.8 and 3.3.19]{BH}; see also \cite[Example 6.1]{N} and \cite[Page 145]{BH}.
Theorem \ref{7} implies that this ring $R$ satisfies $\lp$ but $\widehat{R}$ does not.
\end{proof}


\end{document}